\title[Pullbacks of groups]
{Bredon Cohomology, $K$-theory and $K$-Homology  of Pullbacks of  groups.}
\keywords{Eilenberg-Moore Spectral  Sequence, Bredon Cohomology, Negative algebraic  K-theory, Chrystallographic  groups. 2010 Math  Subject  Classification: Primary: 19K99, Secondary:   19K33, 55N15, 55N91, }
\author{No\'{e} B\'{a}rcenas }
        \address{
Centro  de  Ciencias Matem\'aticas. \\ UNAM Campus  Morelia\\Ap.Postal  61-3 Xangari\\ Morelia, Michoac\'an.  M\'EXICO 58089}
         \email{barcenas@matmor.unam.mx}
         \urladdr{http://www.matmor.unam.mx/~barcenas}
\author{Daniel Juan-Pineda}
\address{Centro  de  Ciencias Matem\'aticas. \\ UNAM Campus  Morelia\\Ap.Postal  61-3 Xangari\\ Morelia, Michoac\'an.  M\'EXICO 58089}
 \email{daniel@matmor.unam.mx}
\author{Mario Vel\'asquez}
\address{Centro  de  Ciencias Matem\'aticas. \\ UNAM Campus  Morelia\\Ap.Postal  61-3 Xangari\\ Morelia, Michoac\'an.  M\'EXICO 58089}
 \email{mavelasquezm@gmail.com}
 \urladdr{https://sites.google.com/site/mavelasquezm/}
         \date{\today}
\DeclareMathAlphabet\EuR{U}{eur}{m}{n}
\SetMathAlphabet\EuR{bold}{U}{eur}{b}{n}
\theoremstyle{plain}
\newtheorem{theorem}{Theorem}[section]
\newtheorem{lemma}[theorem]{Lemma}
\newtheorem{proposition}[theorem]{Proposition}
\newtheorem{corollary}[theorem]{Corollary}
\theoremstyle{definition}
\newtheorem{definition}[theorem]{Definition}
\newtheorem{example}[theorem]{Example}
\newtheorem{condition}[theorem]{Condition}
\newtheorem{remark}[theorem]{Remark}
\newtheorem{convention}[theorem]{Convention}
\newtheorem*{theoremn}{Theorem}
\global\let\c@equation=\c@theorem}
\newcommand{\comsquare}[8]                   
{\begin{CD}
#1 @>#2>> #3\\
@V{#4}VV @V{#5}VV\\
#6 @>#7>> #8
\end{CD}
}
\newcommand{\xycomsquare}[8]                   
{\xymatrix
{#1 \ar[r]^{#2} \ar[d]^{#4} &
#3 \ar[d]^{#5}  \\
#6\ar[r]^{#7} &
#8
}
}
\newcommand{\calf}{\mathcal{F}}
\newcommand{\calfin}{\mathcal{FIN}}
\newcommand{ \calr}{\mathcal{R}}
\newcommand{\IC}{{\mathbb C}}
\newcommand{\IH}{{\mathbb H}}
\newcommand{\IK}{{\mathbb K}}
\newcommand{\IQ}{{\mathbb Q}}
\newcommand{\IR}{{\mathbb R}}
\newcommand{\IZ}{{\mathbb Z}}
\newcommand{\IZc}{{\mathbb Z}/{4\mathbb{Z}}}
\newcommand{\curs}{\EuR}
\newcommand{\CHAINCOMPLEXES}{\curs{CHCOM}}
\newcommand{\Or}{\curs{Or}}
\newcommand{\RINGS}{\curs{RINGS}}
\newcommand{\Hom}{\operatorname{Hom}}
\newcommand{\Tor}{\operatorname{Tor}}
\newcommand{\EGF}[2]{E_{#2}(#1)}                   
\newcommand{\eub}[1]{\underline{E}#1}              
\newcommand{\higherlim}[3]{{\setbox1=\hbox{\rm lim}
        \setbox2=\hbox to \wd1{\leftarrowfill} \ht2=0pt \dp2=-1pt
        \mathop{\vtop{\baselineskip=5pt\box1\box2}}
        _{#1}}^{#2}#3}
\newcommand{\version}[1]                       
{\begin{center} last edited on #1\\
last compiled on \today \\
name of texfile: \jobname
\end{center}
}
\newcounter{commentcounter}
\begin{document}

\begin{abstract}

We  develop a  spectral  sequence of  Eilenberg-Moore type  to  compute  Bredon  Cohomology  of  spaces  with  an action  of a  group  given  as a  pullback. 

Using several other  spectral  sequences, and  positive  results  on the Baum-Connes  Conjecture,  we  are  able  to  compute  Equivariant  $K$-Theory  and  $K$-homology  of the reduced  $C^*$-algebra of  a 6-dimensional crystallographic  group $\Gamma$ introduced  by  Vafa  and  Witten.  
 We  also  use  positive  results   on the  Farrell-Jones conjecture to  give  a  vanishing result  for the algebraic  $K$-theory  of  the  group  ring of  the  group $\Gamma$ in negative degrees. 
\end{abstract}

\maketitle

\section{Introduction}\label{Introduction}

We  develop a  method  to  compute  Bredon  Cohomology  and  equivariant (co)-homology  theories  of  spaces  with an  action  of  a  discrete  group $\Gamma$   obtained  as  a  pullback of discrete  groups over a  finite  group. 

 \begin{condition}\label{diagrampullback}
 Let   $\Gamma$ be  a  group which   is obtained as a pullback diagram
$$
\xymatrix{
  \Gamma \ar[r]^{p_2}\ar[d]_{p_1} & H\ar[d]^{\pi_2}\\
  G \ar[r]_{\pi_1} & K}
$$
where $K$  is  a  finite  group. 
  \end{condition}

Given  such a  pullback  diagram,  the group $\Gamma$ can be viewed as a subgroup of $G\times H$, namely $\Gamma=\{(g,h)\in G\times H\mid \pi_1(g)=\pi_2(h)\}$.  When the maps in the pullback are clear from the context, we denote this pullback by $G\times_KH$.
We develop a spectral sequence in Theorem \ref{eilenbergmoore} converging to Bredon cohomology groups of spaces  with a    $\Gamma$-action when $\Gamma$ is defined as a pullback as in condition \ref{diagrampullback}.

\begin{theoremn} \ref{eilenbergmoore} [Eilenberg-Moore spectral sequence for  Bredon cohomology]
  Let  $\Gamma$  be  a  group   as in \ref{diagrampullback}. Assume  that  $X$ is  a   $G$-CW  complex with  isotropy  in a family $\calf$   of  finite  subgroups  of $G$ and $Y$ is  an $H$-CW  complex with  isotropy  in a  family  $\calf^{'}$ of  finite  subgroups  of $H$.  Let  $M(?)$ be  a  Bredon  coefficient  system, which takes values in the category  of commutative rings. Assume  that  $M(?)$  satisfies  the  projectivity  condition \ref{conditionP}. Then, there  is  a  spectral  sequence  with  $E_2$  term  given  by  
  $$
   \Tor_{M(K/K)}^{p,q}(H_\calf^*(X,M),H_{\calf'}^*(Y,M))  
   $$ 
  which  converges  to 
  $$
  \Tor_{M(K/K)}^{p,q}(\underbar{C}_G^*(X,M),
  \underbar{C}_H^*(Y,M)).
  $$
  \end{theoremn}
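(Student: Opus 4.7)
The plan is to model this on the classical Eilenberg--Moore spectral sequence for the cohomology of a pullback of fibrations, taking $R := M(K/K)$ as the ambient commutative ring and working in the homotopy category of differential graded $R$-modules. The first task is to promote the two Bredon cochain complexes to dg-modules over $R$. The homomorphisms $\pi_1 : G \to K$ and $\pi_2 : H \to K$ of~\ref{diagrampullback} induce change-of-group functors between the respective orbit categories and pull $M$ back to compatible coefficient systems on each side, while the hypothesis that $M$ takes values in commutative rings provides a canonical multiplicative action of $R = M(K/K)$ on every term of both $\underbar{C}_G^*(X,M)$ and $\underbar{C}_H^*(Y,M)$. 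One checks that these actions commute with the Bredon differentials, so that both complexes are bona fide dg-modules over $R$.

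With these module structures in place, I would construct the hyperTor target of the sequence via a Cartan--Eilenberg resolution. Choose a resolution $P^{\bullet,*} \to \underbar{C}_G^*(X,M)$ by a bicomplex of $R$-projective dg-modules, with the first index recording the resolution degree and the second the internal cochain degree, form the tensor product bicomplex $P^{\bullet,*} \otimes_R \underbar{C}_H^*(Y,M)$ with resolution index $p$ and total internal index $q$, and define
\[
\Tor_R^{p,q}\bigl(\underbar{C}_G^*(X,M),\underbar{C}_H^*(Y,M)\bigr)
\]
as the cohomology of the associated total complex. This is, tautologically, the object the spectral sequence is supposed to converge to. The spectral sequence itself is the one associated to the filtration of the total complex by the resolution degree $p$, and convergence is a standard first-quadrant argument once the resolution is arranged to be bounded below in each cochain degree.

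The $E_1$-page is the internal cohomology of each column $P^{p,*} \otimes_R \underbar{C}_H^*(Y,M)$, and the $d_1$-differential is induced by the resolution boundary of $P^{\bullet,*}$. Here the projectivity condition~\ref{conditionP} enters and does the main work: it is designed precisely to guarantee that $P^{p,*}$ is flat enough over $R$ that internal cohomology commutes with the tensor product, giving $E_1^{p,q} \cong P^{p,*} \otimes_R H_{\calf^{'}}^*(Y,M)$. Passing to $d_1$-cohomology and using that $P^{\bullet,*}$ resolves $\underbar{C}_G^*(X,M)$ then identifies the $E_2$-page with $\Tor_R^{p,q}(H_\calf^*(X,M), H_{\calf^{'}}^*(Y,M))$, as required. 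The main obstacle is exactly this interchange of cohomology and tensor product; without~\ref{conditionP} one would obtain only a universal-coefficient-type spectral sequence and lose the clean $\Tor$ description of the $E_2$-page. Secondary points requiring care are the independence of the construction from the choice of resolution and the multiplicative tracking of the commutative ring structure, so that the bigraded $E_2$ and the abutment inherit their natural products from the fact that $M$ takes values in commutative rings.
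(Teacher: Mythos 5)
Your proposal is correct in substance and rests on the same mechanism as the paper: the algebraic Eilenberg--Moore (hyper-$\Tor$) spectral sequence for differential graded modules over the dga $C^*_K(M)$, which is concentrated in degree zero and equals $M(K/K)$. The packaging differs: the paper's entire proof consists of quoting the First Eilenberg--Moore Theorem \ref{spectral} from McCleary and specializing, after setting up the dg-module structures in Propositions \ref{tensorcoefficients}--\ref{eilenberg-zilber1}, whereas you re-derive that theorem by hand from a Cartan--Eilenberg resolution; since the dga here is an ordinary ring this is legitimate and arguably more self-contained. Two points in your write-up should be repaired, though neither is fatal. First, the $E_1$-identification should read $E_1^{p,*}\cong H^*\bigl(P^{p,*}\bigr)\otimes_{R} H^*_{\calf'}(Y,M)$ rather than $P^{p,*}\otimes_{R} H^*_{\calf'}(Y,M)$; what makes the K\"unneth interchange work is the defining property of a Cartan--Eilenberg (proper projective) resolution that the cycles, boundaries and cohomology of each column are themselves projective, so that $H^*(P^{\bullet,*})$ is a projective resolution of $H^*_{\calf}(X,M)$ and the $d_1$-cohomology genuinely computes $\Tor_{R}\bigl(H^*_{\calf}(X,M),H^*_{\calf'}(Y,M)\bigr)$. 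Second, you have misassigned the role of condition \ref{conditionP}: it is not what makes the resolution flat (that is automatic once you choose a projective resolution); in the paper it serves to make the unresolved cochain complexes $\underbar{C}_G^*(X,M)$ themselves projective over $M(K/K)$, which is what later forces the spectral sequence to collapse at $E_2$ and yields the K\"unneth isomorphism of Theorem \ref{kunneth}.
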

The  groups $H_{\calf'}^*(Y,M)$ denote  Bredon  co-homology  with coefficients  in   the  contravariant  functor $M$,  defined  on  a  family  $\calf'$  containing  the  isotropy groups  of $Y$. The  groups  $\underbar{C}^*_G(X,M)$  denote   Bredon  cochain  complexes  with  a differential  graded  structure explained  in detail  in Section \ref{sectiondga}. The groups $\Tor_{M(K/K)}^{p,q}(\;, \;)$ are   derived  functors  of  differential  graded  algebras  and  modules. 

 The  spectral  sequence  gives a method  to  compute $\Gamma$-equivariant Bredon cohomology groups   out  of the  (potentially  easier  to  calculate) $G$- respectively, $H$-equivariant  cohomology  groups  of  $X$ and $Y$,  together  with knowledge  about   their  structure  as modules over  the  ring $M(K/K)$. 

The pullback  structure  in  \ref{diagrampullback} appears  in  the  computations  of  Bredon  cohomology  of  crystallographic  groups $\Gamma$ with a  given (finite) point  group $K$. 
These  groups  are  given as  an  extension  
\begin{equation}\label{equationcristal}
1\to  \mathbb{Z}^n\to \Gamma\to K\to  1
\end{equation}
where $K$ is finite and the conjugation action  on $\mathbb{Z}^n$ is given by a  representation $\rho:K\to Gl_n(\IZ)$. 
IN  this  situation,  the  space $\mathbb{R}^{n} $ with the  induced  action  is a  model  for  $\eub{\Gamma}$. This  is a consequence  of  Proposition  1.12, page 30 in  \cite{connolykosniewsky}.

Splitting  the  representation  $\rho: K \to  Gl_n(\IZ)$  gives  a  pullback  structure  on  $\Gamma$. More  precisely, assume  that $\IZ^n$  with  the  action  given  by  $\rho$  has  a  $K$-invariant  decomposition $\IZ^n[\rho]= A\oplus B$. 

Denote  by  $G$  the  semidirect  product  $A\rtimes  K$  and  by  $H$  the  semidirect  product  $B\rtimes  K$. Then, the group $\Gamma$  is  isomorphic  to  the  pullback  $G\times_K H$. See Remark  \ref{remarkcrystallographic}  for  details  on  this.

The  main  application  of  Theorem \ref{eilenbergmoore}  will be a  method  for  the  computation of  equivariant (co)-homology  theories   evaluated  on classifying  spaces  for  families  of subgroups of $\Gamma$  as  in \ref{cristalograficos}.

 The  interest  in these  computations  comes  from the  fact  that  the  assembly  maps in the   Baum-Connes  conjecture \cite{baumconnes}
\begin{equation}\label{equationbaumconnes}
K_*^ \Gamma(\eub{\Gamma})\to K_*(C^*_r(\Gamma))
\end{equation}
and in  the  Farrell-Jones  Conjecture \cite{luck1998}
\begin{equation}\label{equationfarrelljones}
\IH_*^\Gamma(\EGF{\Gamma}{\mathcal{VC}}, \IK^{-\infty}(R))\to K_*(R\Gamma)
\end{equation}
involve   equivariant  homology  theories  evaluated  on these  spaces. Computations  of  Bredon  (co)-homology  groups associated  to  these (co)-\-ho\-mology  theories   give inputs  to a  spectral  sequence  of  Atiyah-Hirze\-bruch  type  \cite{luck1998},  abutting  to the   relevant  equivariant  (co)-\-ho\-mo\-logy groups.

Until  now, the  methods   developed  for the  computation   of $K$-theory  and  $K$-homology  groups  of extensions    $\Gamma$ as   in \ref{equationcristal} include  assumptions on   the maximality  of finite, respectively virtually cyclic  subgroups  relevant  to  the  computation, as  well  as strong  hypotheses  on  their  normalizers. This concerns particularly  conditions M  and NM  in   \cite{luck-stamm}, \cite{luckdavissemidirectproduct},   or  explicit  computations  related  to the  Weyl groups  of  them, as  in \cite{luecklangertopologicalktheory}.  
All  of  them  restrict  the  class  of  extensions  to  those  arising  from   conjugation  actions which are  free  outside  of the  origin.

In  another  direction,  extensive  knowledge  of  models for  both  spaces,       using  the  classification  of  crystallographic groups  in a  given  dimension  also  gives information  about the  homology  groups  relevant  to the Farrell-Jones  Conjecture, as  it  is  done in  \cite{farleyortiz}. 

The methods  derived  from the  spectral  sequence  in Theorem \ref{eilenbergmoore} rely neither on  the  dimension, as  the  use  of  specific models  in \cite{farleyortiz},  nor  on freeness of  the  conjugation  action as  in \cite{luecklangertopologicalktheory}, \cite{luck-stamm}, \cite{luckdavissemidirectproduct}.

 To  illustrate  our  method,  we  concentrate  in  a  group  extension 
 $$
 1\to \mathbb{Z}^6\to \Gamma\to \mathbb{Z}/4\IZ\to 1 ,
 $$
 which  gained  interest  in    theoretical  physics \cite{vafa-witten}.

  \begin{example}\label{example}[The 6- dimensional Vafa-Witten toroidal  orbifold  quotient]
  Consider the action of $\IZ/4\IZ$ on $\IZ^{\oplus6}$ induced from the 
  action of $\IZ/4\IZ$ on $\IC^3$, given by
  $$
  k(z_1,z_2,z_3)=(-z_1,iz_2,iz_3).
  $$
  The associated semidirect product 
  \begin{equation}\label{vafawittengroup}
  1\to \mathbb{Z}^6 \to \Gamma \to \IZ/4\IZ \to 1
  \end{equation}
  will  be  called  the  Vafa-Witten  group, this splits as a multiple pullback
  $$
  (\IZ\rtimes\IZ/4\IZ)\times_{\IZ/4\IZ}(\IZ\rtimes\IZ/4\IZ)\times_{\IZ/4\IZ}(\IZ^2\rtimes\IZ/4\IZ)\times_{\IZ/4\IZ}(\IZ^2\rtimes\IZ/4\IZ).
  $$
  Where the first two semidirect products are taken with respect to  the  conjugation action of $\mathbb{Z}/4$  on  $\mathbb{Z}^2$  given  by  scalar  multiplication by  $-1$,  and the two  last  ones  are  given  by  the conjugation action  on $\mathbb{Z}^2$ given by complex  multiplication by   $i$. Notice  that  the  conjugation action determined  by  $\Gamma$, and  more  specifically,  the one  coming  from the block given  by  the  action $\mathbb{Z}\rtimes\mathbb{Z}/4\mathbb{Z}$ is  not free  outside  of the  origin. Condition $NM$  of  \cite{luck2005} is  not  satisfied  in  this  case, although  our  methods  readily  apply  to this  situation.  
  \end{example}

For  the  group  described  in example \ref{example},  we show  that  the spectral sequence from  Theorem \ref{eilenbergmoore} collapses at the  $E_2$-term for  the  specific  choice  of the complex  representation  ring  as a  Bredon  coefficient system. With the  use  of  a  Universal Coefficient  Theorem  for  Bredon  Cohomology, Theorem 1.13  in \cite{BAVE2013}, 
    completely determines  the equivariant  $K$-Homology  of the  classifying  space  for  proper actions. 

\begin{theoremn}\ref{cristalograficos} [Topological $K$-Theory]
  Let $\Gamma$  be  the group $\IZ^6\rtimes\IZc$ acting on $\IR^6$ as in \ref{example}. The topological  $K$-theory  of  the  reduced  $C^*$-algebra  of  $ \Gamma$  is  as  follows: 
    \begin{itemize}
    \item $K_0(C_r^*(\Gamma))\cong K_0^{\Gamma}(\underbar{E}\Gamma)\cong\IZ^{\oplus47}$ and
    \item $K_1(C_r^*(\Gamma))\cong K_1^{\Gamma}(\underbar{E}\Gamma)=0$.
    \end{itemize}
  \end{theoremn}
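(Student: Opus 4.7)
The plan is to assemble four ingredients in sequence. First, since $\Gamma$ is virtually abelian it has the Haagerup property, so the Higson--Kasparov theorem implies the Baum--Connes conjecture for $\Gamma$ and gives the isomorphism $K_*(C_r^*(\Gamma))\cong K_*^{\Gamma}(\underbar{E}\Gamma)$ appearing in (\ref{equationbaumconnes}). This reduces the statement to computing the equivariant $K$-homology of $\underbar{E}\Gamma$; by the Connolly--Kosniewski result cited after (\ref{equationcristal}), $\mathbb{R}^6$ with its natural isometric $\Gamma$-action is a model.

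Second, I would apply the Atiyah--Hirzebruch type spectral sequence of L\"uck, whose $E_2$-page is Bredon cohomology of $\underbar{E}\Gamma$ with coefficients in the complex representation ring functor $R_{\mathbb{C}}(?)$, together with the Universal Coefficient Theorem for Bredon cohomology (Theorem 1.13 of \cite{BAVE2013}), thereby reducing the problem to the computation of $H^*_{\calfin}(\underbar{E}\Gamma;R_{\mathbb{C}})$.

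Third, using the explicit fourfold pullback decomposition of Example \ref{example}, I would iterate Theorem \ref{eilenbergmoore} to express these Bredon cohomology groups in terms of iterated Tor groups over $R_{\mathbb{C}}(\mathbb{Z}/4\mathbb{Z})$ of the Bredon cohomologies of the four semidirect-product factors $\mathbb{Z}\rtimes\mathbb{Z}/4\mathbb{Z}$ and $\mathbb{Z}^{2}\rtimes\mathbb{Z}/4\mathbb{Z}$. For each of these smaller groups, $\mathbb{R}$ or $\mathbb{R}^{2}$ with the induced action is a model for the classifying space for proper actions, and the equivariant cell structure is obtained from an explicit stratification by fixed sets of cyclic subgroups of $\mathbb{Z}/4\mathbb{Z}$. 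The Bredon cohomology of each factor with coefficients in $R_{\mathbb{C}}$ can therefore be written down directly from the classical representation theory of $\mathbb{Z}/4\mathbb{Z}$ and $\mathbb{Z}/2\mathbb{Z}$.

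The main obstacle will be the collapse argument. To have the Eilenberg--Moore sequence collapse at $E_{2}$ one must verify the projectivity hypothesis \ref{conditionP} for $R_{\mathbb{C}}(?)$ and then prove that each factor's Bredon cohomology is projective (in fact, free of explicit rank) as a module over $R_{\mathbb{C}}(\mathbb{Z}/4\mathbb{Z})$, forcing $\Tor^{p,q}=0$ for $p>0$. Once collapse is in place, $K_0^{\Gamma}(\underbar{E}\Gamma)$ is the rank of the iterated tensor product of the four factor cohomologies over $R_{\mathbb{C}}(\mathbb{Z}/4\mathbb{Z})$, and a finite combinatorial calculation of these ranks should yield the claimed value $47$. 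The vanishing $K_1^{\Gamma}(\underbar{E}\Gamma)=0$ follows because each factor cohomology is concentrated in even Bredon degree and $K_{1}$ of the coefficient rings $R_{\mathbb{C}}(H)$ vanishes for finite $H$, so the whole $E_2$-page lives in even total degree.
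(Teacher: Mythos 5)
Your proposal follows essentially the same route as the paper: Higson--Kasparov for Baum--Connes, the Atiyah--Hirzebruch spectral sequence together with the Universal Coefficient Theorem of \cite{BAVE2013} to reduce to Bredon cohomology, and iterated application of the collapsed Eilenberg--Moore/K\"unneth formula over $R(\IZ/4\IZ)$ to the fourfold pullback decomposition, with explicit equivariant cell structures on $\IR$ and $\IR^2$ for the factors. One small caveat: the factor cohomologies (e.g.\ $H^0_\calf(\IR,\calr)\cong\IZ^{\oplus 6}$) cannot be free over $R(\IZ/4\IZ)$ for rank reasons; the paper only uses projectivity, extracting the ranks $44+2+1=47$ from short exact sequences of projective modules and quotients by the ideals $\langle\eta^2-1\rangle$ and $\langle\eta-1\rangle$ rather than from a free basis.
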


The  ideas  developed  in Theorem \ref{eilenbergmoore} and   subsequent computations are  particularly  well-suited  to  families  of  subgroups  which  are well-behaved  under  products. The  example  for  such a  family  is, notably,  the  family of  finite   subgroups. Although  the  Eilenberg-Moore  method \ref{eilenbergmoore}  does  not  transfer directly to  the  family  of  virtually  cyclic  subgroups due  to  its  bad  behaviour  under  products,   we  are  able  to  deduce using positive results on  the Farrell-Jones  Conjecture \cite{tsapogas}, \cite{Dj}, \cite{farrelvirtuallycyclic} and  computations  of  lower  algebraic  $K$-Theory \cite{carterlocalization}, \cite{carterlower},  the  following result, computing  the  negative  algebraic $K$-theory of   the  group ring $RG$.

\begin{theoremn}\ref{theoremnegative} [Negative algebraic  K-Theory]
Let  $\Gamma$  be the  group  determined  by the  extension  \ref{vafawittengroup}. Let  $R$ be a ring  of algebraic  integers.  Then,
$$ 
K_i(R\Gamma)=0, \text{ for all }i<0.
$$
\end{theoremn}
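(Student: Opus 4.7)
The plan is to combine the Farrell--Jones Conjecture for $\Gamma$ with Carter's computations of lower algebraic $K$-theory of finite group rings, evaluated against the explicit proper $\Gamma$-space $\IR^{6}$. Since $\Gamma$ is a crystallographic group, and in particular virtually poly-$\IZ$, the positive results on the Farrell--Jones Conjecture cited in \cite{tsapogas}, \cite{Dj}, \cite{farrelvirtuallycyclic} yield that the assembly map \eqref{equationfarrelljones} is an isomorphism, so
$$
K_i(R\Gamma) \;\cong\; \IH_i^\Gamma\!\bigl(\EGF{\Gamma}{\calvcyc};\IK^{-\infty}(R)\bigr).
$$
Because $R$ is a Dedekind domain, and hence regular, the Nil-contributions carrying the cokernel of the relative assembly map from $\calfin$ to $\calvcyc$ vanish in non-positive degrees, so in degrees $i<0$ this target further identifies with $\IH_i^\Gamma(\eub{\Gamma};\IK^{-\infty}(R))$, where $\eub{\Gamma}=\IR^{6}$ carries its affine $\Gamma$-action.

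Next I would run the equivariant Atiyah--Hirzebruch spectral sequence
$$
E^2_{p,q} \;=\; H_p^\Gamma\!\bigl(\eub{\Gamma};K_q(R?)\bigr) \;\Longrightarrow\; \IH_{p+q}^\Gamma\!\bigl(\eub{\Gamma};\IK^{-\infty}(R)\bigr),
$$
whose coefficient system sends an orbit $\Gamma/H$ with $H$ finite to $K_q(RH)$. Carter's main result \cite{carterlower} gives $K_q(RH)=0$ for every finite $H$ and every $q\leq -2$, so this portion of the coefficient system vanishes identically. For the remaining row $q=-1$, I would observe that every finite subgroup of $\Gamma$ injects into $\IZc$ through the quotient of \eqref{vafawittengroup}, so up to conjugacy the finite subgroups are $\{1\}$, $\IZ/2\IZ$ and $\IZc$. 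For each such cyclic group of order dividing $4$, Carter's rank formula \cite{carterlocalization} forces $K_{-1}(RH)=0$, the rational and $p$-local contributions cancelling for small cyclic $p$-groups over a ring of algebraic integers. Thus $E^2_{p,q}=0$ whenever $p+q<0$, the spectral sequence collapses in that range, and together with the two reductions above we conclude $K_i(R\Gamma)=0$ for $i<0$.

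The most delicate step is the reduction from $\calvcyc$ to $\calfin$ in negative degrees: one must check that the Nil-summand attached to every infinite virtually cyclic subgroup $V\leq\Gamma$ vanishes below degree zero. Infinite virtually cyclic subgroups of $\Gamma$ are either semi-direct products $\IZ\rtimes F$ or amalgams of two finite groups over a common index-two subgroup, with $F$ a subgroup of $\IZc$; for such $V$, regularity of $R$ controls the relevant Bass and Waldhausen Nil groups. Once this vanishing is recorded, the remainder of the argument is a direct assembly of the pieces above, with no further input from the geometry of $\Gamma$.
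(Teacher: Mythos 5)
Your proposal is correct in outline and rests on the same two pillars as the paper's argument: the Farrell--Jones isomorphism for the crystallographic group $\Gamma$, and the vanishing of the lower $K$-groups of the relevant group rings fed into an Atiyah--Hirzebruch spectral sequence. The organizational difference is that the paper works with the family $\mathcal{VC}$ throughout: it first classifies the infinite virtually cyclic subgroups of $\Gamma$ explicitly (Proposition \ref{classificationvirtuallycyclic}, using the pullback decomposition and the classification for $\IZ^2\rtimes_i\IZ/4\IZ$ from \cite{luck2005}), and then kills every coefficient group $K_q(R[V])$ with $q\leq -1$ by quoting Carter \cite{carterlower}, \cite{carterlocalization} for the finite ones and \cite[Theorem 2.1]{farrelvirtuallycyclic} together with \cite{Dj} for the infinite ones. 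You instead reduce from $\mathcal{VC}$ to $\mathcal{FIN}$ first and then only need the three finite subgroups $1$, $\IZ/2\IZ$, $\IZ/4\IZ$; this spares you the explicit list of infinite virtually cyclic subgroups, at the cost of having to justify the reduction. That is also where your argument needs repair: attributing the vanishing of the relative $\mathcal{FIN}\to\mathcal{VC}$ term in negative degrees to regularity of $R$ is not the right reason. The relative term attached to an infinite virtually cyclic $V$ is built from (twisted) Bass and Waldhausen Nil groups of $RF$ for finite $F\leq V$, and $RF$ is \emph{not} regular even when $R$ is; the vanishing of these contributions in degrees $\leq -1$ is precisely the content of \cite[Theorem 2.1]{farrelvirtuallycyclic} and its generalizations in \cite{Dj} --- the same input the paper invokes directly. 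Once you cite that instead, the two proofs become essentially equivalent, and your shortcut through the finite subgroups (all cyclic of order dividing $4$, since $\IZ^6$ is torsion-free) is a clean way to package the computation.
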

  
This  paper  is  organized  as  follows: 

\tableofcontents

\subsection*{Acknowledgements}
The  first  author received   support  of a  CONACYT Postdoctoral  Fellowship. The second  author  received  support  from   DGAPA  and  CONACYT research grants.  The  third  author  received support  of  a  UNAM Postdoctoral  Fellowship.
 
 We thank Bernardo Uribe for valuable remarks in a preliminary version of this paper.

\section{The  Eilenberg-Moore  spectral  sequence and  Bredon Cohomology}\label{sectiondga}

  \begin{definition}
  Recall  that  a  $G$-CW  complex structure  on  the  pair $(X,A)$  consists  of a  
  filtration of  the $G$-space $X=\cup_{-1\leq n } X_{n}$ with $X_{-1}=\emptyset$,  
  $X_{0}=A$ and every  space   is inductively  obtained  from  the  previous  one   
  by  attaching  cells  with  pushout  diagrams  
  $$\xymatrix{\coprod_{\lambda} S^{n-1}\times G/H_{\lambda} \ar[r] \ar[d] & X_{n-1} \ar[d] \\ 
  \coprod_{\lambda}D^{n}\times G/H_{\lambda} \ar[r]& X_{n}}$$   
  \end{definition}

\begin{definition}
Let  $\calf$  be   a  family of  subgroups  which  is  closed  under  subgroups  and  conjugation. A model  for  the  classifying  space for  the  family  $\mathcal{F}$  is  a $G$-CW  complex $X$  satisfying  
\begin{itemize}
  \item{All  isotropy  groups of  $X$  lie  in $\mathcal{F}$.}
\item{For  any $G$-CW  complex $Y$ with  isotropy  in $\mathcal{F}$, there  exists up  to  $G$-homotopy   a  unique  $G$-equivariant  map $f:Y\to X$. }
\end{itemize}
\end{definition}
A  model  for  the classifying  space  of  the  family  $\mathcal{F}$  will be  usually  denoted  by  $ \EGF{G}{\mathcal{F}}$. 

Particularly  relevant   is  the classifying  space  for  proper actions,  the  classifying  space  for   the  family $\calfin$  of  finite  subgroups, denoted  by  $\eub{G}$ and  the  space  $\EGF{G}{\mathcal{VC}}$  for  the  family $\mathcal{VC}$ of  virtually cyclic subgroups.

 Let  $X$  be  a  $G$-CW-complex. The Bredon  chain complex is  defined  as the contravariant   functor  to the  category of  chain complexes
  ${C}_{*}^G(X):\Or_\calf(G)\to  \mathbb{Z}-\CHAINCOMPLEXES  $ which assigns  
  to every  object  
  $G/H$     the  cellular $\mathbb{Z}$-chain  complex   of  the $H$-fixed point  
  complex    ${C}_{*}(X^ {H})\cong C_{*}({\rm  Map  }_{G}(G/H, X))$  with  
  respect  to  the  cellular  boundary  maps $\underline{\partial}_{*} $. The $n$-chains  of  the  Bredon  chain  complex   evaluated  on an  object  $G/K$ of  the  orbit category, consist  of elements  of  free  abelian  groups $\bigoplus_\lambda \IZ[e_\lambda$,  where  $e_\lambda$ denote the cell  orbits   of  type $D^n\times G/K$ in  the  cell  decomposition  above.

 Let $G$  be  a  discrete  group,  let  $\Or(G)$ be  the  orbit  category of  $G$,  where  objects  are  homogeneous sets $G/H$  and   morphisms  are  $G$-equivariant  maps. 
   
Let  $R$  be  a   ring. Recall  that  a  contravariant  Bredon  functor $M$ with  values  on  $R$-modules  is  a  contravariant  functor   defined  on $\Or(G)$ to  the  category  of $R$-modules.   

 \begin{definition}[Bredon  co\-cha\-in complex]
 
Given  a  contravariant  Bredon  functor $M$, the  Bredon    cochain   complex  $C_G^*(X;M)$ is  defined  as the   abelian  group   of  natural  transformations   of  functors  defined  on  the  orbit  category ${C}^{*}_G(X) \to  M$. In  symbols, 
$$
C_G^n(X;M)=\Hom_{\Or_{\calf(G)}}({C}_n(X),M),
$$
where  $\mathcal{F}(G)$ is  a  family containing  the  isotropy  groups  of  $X$. 

Given a  set  $\{e_{\lambda}\}$ of   representatives of  the  orbits  of n-cells of  the  $G$-CW  complex  $X$,  and isotropy  subgroups  $P_{\lambda}$  of  the  cells  $e_{\lambda}$,   the  abelian  groups $C_G^n(X,M)$  satisfy:
 $$
 C_G^n(X,M)= \underset{\lambda}{\prod }Hom_{\mathbb{Z}}(\mathbb{Z}[e_{\lambda}], M(G/P_{\lambda}))
 $$   
 with  one  summand  for  each  orbit representative  $e_\lambda$.
 They  afford  a differential $\delta^n:C_G^n(X,M)\to C_G^{n+1}(X,M)$ determined  by  $\underline{\partial}_*$ and  maps $M(\phi):  M(G/P_\xi)\to M( G/ P_\lambda )$  for  morphisms  $\phi:G/P_\lambda \to  G/P_\xi$.

Given  a  functor  $M$ taking  values  in  the  category  of commutative  rings  with  1, the  Bredon  cochain  complex  has  cup  products
$$
\cup: C_G^m(X,M)\otimes C_G^n(X,M)\to C_G^{n+m}(X,M).
$$
See \cite{bredon}, Chapter I.8  in pages  19-20.

We  will  list now  some  algebraic  definitions. 

\begin{definition}[Differential  Graded  Algebra]
Let  $A$  be  a  graded  algebra. $A$  is  said  to  be  a  differential  graded  algebra  if  there  exists a  group homomorphism $d:A\rightarrow A$ of degree $+1$ satisfying
\begin{enumerate}
\item $d^2=0$
\item $d(ab)= d(a)b + (-1)^{\mid a\mid} ad(b)$
where $\mid a\mid$  is the  degree of  the  element  $a\in A$. 
\end{enumerate}

\end{definition}

\begin{definition}[Differential Modules  over  a  Differential  Graded  Algebra]

Let  $(A,d_A)$ be  a differential  graded  algebra. A  differential  graded  module over $A$  is a  graded $A$-module  $M$  together  with  differentials $d_M$ satisfying   $d_M(am)= d_A(a)m + (-1)^{\mid m\mid} ad_M(m)$

\end{definition}

\begin{remark}[DGA Associated  to a  Bredon Module]
Let  $M$  be  a  contravariant  functor  defined  on  the  full  subcategory  $\Or(G, \mathcal{F})$ consisting  of  homogeneneous  spaces $G/H$,  where  $H\in \mathcal{F}$. Assume  $M$ takes  values  in   the  category  of commutative rings with  1. The  differential  graded  algebra $C^*_G(M)$  is  defined  as the  inverse  limit 
$$
C^0_G(M)= \underset{G/P \in \mathcal{F}}{\lim}M(G/P)
$$
where  the  limit is  taken in the  category  of  commutative rings with 1,  
$C^i_G(M)=0$  for  $i\neq0$, and  $d_i=0$  for  all $i$. Note that if the group $G$ is finite, and $\calf$  is  the  family  of finite  subgroups,  $C^0_G(M)=M(G/G)$.
\end{remark}

The  full  Bredon  cochain  complex  $\bigoplus_n C_G^n(X,M)$ together  with  the differential  graded  $C_G^*(M)$-module  structure will be  denoted by ${C}_G^*(X,M)$.        
\end{definition}

 \begin{definition}[Bredon cohomology] 
  The  Bredon  cohomology  groups   with  coefficients  in  $M$, denoted  by  
  $H^{*}_{G} (X,  M)$    are  the  cohomology   groups  of  the  cochain  complex  
  $\big ({C}_{G}^ *(X, M), \delta^* \big )$. 
  \end{definition}

We   will  now  assume  the  following  condition,  which  simplifies  the  differential  graded  structure  involved in  the  cochain  complexes.

\begin{condition}[Condition P]\label{conditionP}
  We  will  assume that $\Gamma$  fits  in a  pullback  diagram as  in condition \ref{diagrampullback}.
  
  
Consider a contravariant Bredon functor $M(?)$ taking  values  on the category  of commutative rings with  $1$.  Let $P$ be  a  finite  subgroup  of  $\Gamma= G\times_K H$. 
\begin{itemize}
\item  The  maps  $(\pi_1\circ p_1)^*: M(K/\pi_1(P))\to M(\Gamma/P)$, $(\pi_2\circ p_2)^*:M(K/\pi_2(P))\to  M(\Gamma/P)$ furnish  $M(\Gamma/P)$ with  a  structure  of a projective module  over the  ring   $M(K/\pi_1(P))\cong  M(K/\pi_2(P))$. 
\end{itemize}
\end{condition}


The  following  fact  is  crucial  for  our  computations  related  to  the  family  of  finite  subgroups.

\begin{lemma}[Structure Lemma  for  families  of  finite  subgroups] \label{lemmafamilyfiniteisotropies}
Let  $\Gamma$  be  a  group  given as  a pullback as  in \ref{diagrampullback}.   
\begin{itemize}

\item The  structure  maps  $p_1$  and  $p_2$ give  a bijective  correspondence  between  the  elements  of the  family $ \calfin(\Gamma)$  and  the  family $\calfin(G)\times_K \calfin(H)
 :=\{(P\times_{\pi_1(P)} Q)\mid P\in\calfin(G), Q\in\calfin(H)\}$.
 
\item Let   $X$ and $Y$ be  proper  $G$-,  respectively  $H$-CW-complexes. Then,  the isotropy groups of the action of $G\times_KH$ in $X\times  Y$ are  contained in  the  family $\calfin(G)\times_K\calfin(H)$. 

\end{itemize}

\end{lemma}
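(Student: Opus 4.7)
The plan is to prove both parts by direct computation from the explicit pullback description $\Gamma=\{(g,h)\in G\times H\mid\pi_1(g)=\pi_2(h)\}$.

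For the first assertion I would construct mutually inverse maps between the two families. The forward map sends $F\in\calfin(\Gamma)$ to the pair $P:=p_1(F)\in\calfin(G)$, $Q:=p_2(F)\in\calfin(H)$; compatibility inside $\Gamma$ forces $\pi_1(P)=\pi_2(Q)$ inside $K$. The inverse sends such a compatible pair to $P\times_{\pi_1(P)}Q\subseteq \Gamma$, which has finite order $|P|\cdot|Q|/|\pi_1(P)|$. The heart of the argument is the identification $F=P\times_{\pi_1(P)}Q$: one inclusion is tautological, and for the other I pick, given $(g,h)$ in the pullback, some $h_0\in Q$ with $(g,h_0)\in F$, and observe that $hh_0^{-1}\in Q\cap\ker\pi_2$. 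In the intended crystallographic setting, $\ker\pi_2$ is a subgroup of a lattice, hence torsion-free, which makes the intersection trivial and forces $h=h_0$.

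For the second assertion, the isotropy of $(x,y)\in X\times Y$ under the diagonal $\Gamma$-action is $\Gamma_{(x,y)}=\Gamma\cap(G_x\times H_y)$; properness of the $G$-CW structure on $X$ and the $H$-CW structure on $Y$ ensures $G_x$ and $H_y$ are finite. Hence $\Gamma_{(x,y)}=G_x\times_K H_y$ is a pullback of finite subgroups and lies in $\calfin(G)\times_K\calfin(H)$ by part (i).

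The main obstacle I anticipate is the reverse inclusion in the identification $F=P\times_{\pi_1(P)}Q$. Without hypotheses on the kernels of the $\pi_i$, a finite subgroup of $\Gamma$ can sit strictly inside the pullback of its projections (as happens for the diagonal copy of $\IZ/4$ inside $\IZ/4\times_{\IZ/2}\IZ/4$), so the step in which I conclude $h=h_0$ genuinely relies on the absence of nontrivial finite subgroups in $\ker\pi_2$. This is precisely the property that holds in the crystallographic setting driving the applications of the lemma, and it is where the structure of $\Gamma$ enters in an essential way.
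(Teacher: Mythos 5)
The paper states this lemma without any proof, so there is nothing to compare your argument against except the statement itself; your write-up in effect supplies the missing proof, and it is essentially correct. Your reduction of both bullets to the single identity $F=p_1(F)\times_{\pi_1(p_1(F))}p_2(F)$ is the right move: the forward and inverse maps, the order count $|P|\cdot|Q|/|\pi_1(P)|$, the observation that $\pi_1(p_1(F))=\pi_2(p_2(F))$, and the computation $\Gamma_{(x,y)}=\Gamma\cap(G_x\times H_y)=G_x\times_{\pi_1(G_x)}H_y$ for the second bullet are all sound. More importantly, you are right that the key step $h=h_0$ requires a hypothesis beyond Condition \ref{diagrampullback}: your counterexample of the diagonal $\IZ/4$ inside $\IZ/4\times_{\IZ/2}\IZ/4$ is valid (every subgroup of the form $P\times_{\pi_1(P)}Q$ there is either of order $1$, $2$, $8$, or is $\IZ/2\times\IZ/2$, so the cyclic diagonal of order $4$ is not in the family), and it shows the lemma as literally stated is false for a general pullback over a finite group. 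The statement becomes true exactly under the condition you isolate, namely that $Q\cap\ker\pi_2$ (equivalently, by symmetry, $P\cap\ker\pi_1$) is trivial for finite $Q$, which holds when one of the kernels is torsion-free. This is automatic in every application the paper makes of the lemma, since there $G=A\rtimes K$ and $H=B\rtimes K$ with $A,B$ lattices and $\ker\pi_i$ torsion-free (see Remark \ref{remarkcrystallographic} and the Vafa--Witten decomposition), and the same holds at each stage of the iterated pullback. So your proof establishes precisely what the paper needs; the only caveat is that the extra hypothesis should be recorded in the statement of the lemma, since Condition \ref{diagrampullback} alone does not imply it.
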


  \begin{proposition}\label{pullbackrepresentations} Given a pullback diagram as in condition \ref{diagrampullback}, and a restriction of the pullback to   finite  subgroups
  $$
  \xymatrix{
  \Gamma_1 \ar[r]^{p_2}\ar[d]_{p_1} & Q\ar[d]^{\pi_2}\\
  P \ar[r]^{\pi_1} & K_1}  
  $$
  there is a natural isomorphism of 
  $M(K/\pi_1(P))$-modules 
  $$
  M(\Gamma/\Gamma_1)\cong M(G/P)\otimes_{M(K/\pi_1(P))}M(H/Q).
  $$
  Where the $M(K/\pi_1(P))$-module structure in both sides is given by the pullback 
  diagram.
  \end{proposition}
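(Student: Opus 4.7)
The plan is to construct a candidate natural map out of the ring structure of $M(\Gamma/\Gamma_1)$ and the pullback, then verify bijectivity via the projectivity guaranteed by Condition \ref{conditionP}.

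First I would define the map
$$
\mu\colon M(G/P)\otimes_{M(K/\pi_1(P))}M(H/Q)\longrightarrow M(\Gamma/\Gamma_1)
$$
on elementary tensors by $\mu(a\otimes b):=p_1^*(a)\cdot p_2^*(b)$, where the product is taken in the commutative ring $M(\Gamma/\Gamma_1)$ and $p_1^*,p_2^*$ are the restriction maps induced by the projections of the finite pullback $\Gamma_1=P\times_{K_1}Q$. Commutativity of the pullback square (namely $\pi_1\circ p_1=\pi_2\circ p_2$) implies that the two compositions from $M(K/\pi_1(P))$ into $M(\Gamma/\Gamma_1)$ agree, which in turn shows that $\mu$ respects the balancing relation and descends to the tensor product over $M(K/\pi_1(P))$. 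Naturality of $\mu$ with respect to morphisms of pullback diagrams and $M(K/\pi_1(P))$-linearity follow from the functoriality of $M$ and the bilinearity of ring multiplication.

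The main obstacle is bijectivity. The central input is Condition \ref{conditionP}, which makes $M(\Gamma/\Gamma_1)$, $M(G/P)$ and $M(H/Q)$ all projective as modules over the base ring $M(K/\pi_1(P))$. Combined with Lemma \ref{lemmafamilyfiniteisotropies}, which identifies the set of finite subgroups of $\Gamma$ bijectively with compatible pairs of finite subgroups of $G$ and $H$ over $K$, one obtains a combinatorial description of coset representatives of $\Gamma_1$ in $\Gamma$ in terms of matching coset data over $K_1$. Lifting this set-level identification to modules, and using projectivity to rule out the higher derived obstructions that would otherwise appear in a derived pushout, yields the required matching of generating systems on both sides of $\mu$, and hence its bijectivity. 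In categorical terms, the argument amounts to showing that $M$ sends the pullback square of groups to a pushout square of commutative rings, a statement enabled by projectivity.

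I expect the delicate part to lie not in writing down $\mu$, but in extracting the explicit basis correspondence from Lemma \ref{lemmafamilyfiniteisotropies} in a way that is compatible with the $M(K/\pi_1(P))$-action; a clean way to package this is to exhibit simultaneous decompositions of all three modules as direct summands of free $M(K/\pi_1(P))$-modules indexed by the relevant orbit data, and observe that the indexing sets agree via the pullback of finite sets $P\times_{K_1}Q$.
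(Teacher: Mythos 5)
Your construction of the comparison map is exactly the one the paper uses: the paper's (very terse) proof likewise takes the ring homomorphisms $p_1^*$ and $p_2^*$, forms $a\otimes b\mapsto p_1^*(a)\cdot p_2^*(b)$ on $M(G/P)\otimes_{\IZ}M(H/Q)$, and observes that it factors through the balanced tensor product over $M(K/\pi_1(P))$ because $\pi_1\circ p_1=\pi_2\circ p_2$. Up to that point your write-up is correct and, if anything, more careful than the original.

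The gap is in the bijectivity step, and the mechanism you propose would not close it. Condition \ref{conditionP} asserts that $M(\Gamma/\Gamma_1)$ is \emph{projective} over $M(K/\pi_1(P))$; this controls the vanishing of higher $\Tor$ terms (it is what later makes the Eilenberg--Moore spectral sequence collapse), but it says nothing about whether the image of $\mu$ generates $M(\Gamma/\Gamma_1)$, nor about injectivity of $\mu$. "Ruling out derived obstructions" only tells you that the underived tensor product computes the derived one; it does not tell you that the canonical map from that tensor product to $M(\Gamma/\Gamma_1)$ is an isomorphism. Likewise, Lemma \ref{lemmafamilyfiniteisotropies} is purely a statement about which subgroups of $\Gamma$ are finite (they are exactly the $P\times_{\pi_1(P)}Q$); it supplies no coset or basis data for the modules $M(G/P)$, $M(H/Q)$, $M(\Gamma/\Gamma_1)$, so there is no "basis correspondence" to extract from it. The assertion that $M$ sends the pullback of groups to a pushout of commutative rings is genuinely a property of the particular coefficient functor, not a formal consequence of projectivity: already for the complex representation ring it amounts to the nontrivial claim $R(P\times_{K_1}Q)\cong R(P)\otimes_{R(K_1)}R(Q)$, which requires an actual argument (e.g.\ a character-theoretic count of conjugacy classes of the fibered product against the rank of the tensor product), and for an arbitrary commutative-ring-valued Bredon functor it can fail. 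To be fair, the paper's own proof stops at writing down the map and asserting it is an isomorphism, so you are not missing an argument that the paper supplies; but the step you flag as "the main obstacle" is indeed the whole content of the proposition, and the tools you invoke do not prove it.
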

\begin{proof}
  If $M$ takes values on the category of  commutative  rings  with 1, then, the  ring  homomorphisms  $p_2^* $ and $p_1^*$ give  a  map 
$M(G/P)\otimes_\IZ M(H/Q)\to M(\Gamma/\Gamma_1)$,  which  defines  an isomorphism  \\  $M(G/P)\otimes_{M(K/\pi_1(P))} M(H/Q)\to M(\Gamma/\Gamma_1)$
\end{proof}

\begin{definition}

Let  $\Gamma  $  be a  group  given  as a  pullback  as  in Condition \ref{diagrampullback}. Let  $M$  be  a  contravariant  Bredon  functor  taking  value  on  the  category  of  commutative  rings  with  1.
We  will  denote  by $M^G(?)$,  respectively $M^H(?)$  the  functors $p_1^*(M)$, respectively  $p_2^ *(M)$.   
Consider  the  category  $\Or(G, \calfin)\times \Or(H,\calfin)$ and  consider  the  functor  defined  on  objects  $G/R\times H/Q$  as  $M(G/R)\otimes_\IZ M(H/Q)$.  

We  will  denote the restriction  of  this  functor to   $\Or(G\times_KH, \calfin)$ by $M^G(?)\otimes_\IZ M^H(?)$. On  each object $G\times_K H/ P\times _{\pi_1(P)}Q $, 

$$M^G(?)\otimes_\IZ M^H(?):Or_{\calf\times_K\calf'}(G\times_KH)\rightarrow \RINGS$$
$$(G\times_KH)/(P\times_{\pi_1(P)}Q)\mapsto M(G/P)\otimes_{\IZ} M(H/Q).$$
\end{definition}

\begin{convention}
We can define a further equivalence relation over the restriction of this tensor product, we say $\alpha\cdot\rho_1\otimes\rho_2\sim\rho_1\otimes\alpha\cdot\rho_2$, where $\alpha\in M(G/\pi_1(P))$ and the products in both sides are defined via the  maps $\pi_1^*$ and $\pi_2^*$. Denote the quotient by $(M^G\otimes_{M^K}M^H)(?)$. 
\end{convention}
\begin{lemma}\label{lemmanaturalequivalencetensor}
The isomorphism in Proposition \ref{pullbackrepresentations} can be promoted to a natural equivalence between the functors $M^\Gamma(?)$ and $(M^G\otimes_{M^K}M^H)(?)$. 

\end{lemma}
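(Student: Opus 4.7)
The plan is to obtain a natural transformation by assembling the pointwise isomorphisms of Proposition \ref{pullbackrepresentations} and then verifying naturality under morphisms of the orbit category. First, by Lemma \ref{lemmafamilyfiniteisotropies}, the objects of $\Or_{\calfin}(\Gamma)$ are in bijection with the pairs arising from $\calfin(G)\times_K\calfin(H)$, so every homogeneous space $\Gamma/\Gamma_1$ can be identified with one of the form $\Gamma/(P\times_{\pi_1(P)}Q)$. For each such object, Proposition \ref{pullbackrepresentations} supplies an isomorphism of $M(K/\pi_1(P))$-modules
\begin{equation*}
\varphi_{\Gamma_1}\colon M(G/P)\otimes_{M(K/\pi_1(P))} M(H/Q) \xrightarrow{\;\cong\;} M(\Gamma/\Gamma_1),
\end{equation*}
induced by the ring homomorphisms $p_1^*$ and $p_2^*$. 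These will be the components of the claimed natural transformation.

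Next, I would verify naturality. A morphism $f\colon \Gamma/\Gamma_1 \to \Gamma/\Gamma_2$ in $\Or_{\calfin}(\Gamma)$ is determined by an element $\gamma=(g,h)\in \Gamma$ with $\gamma\Gamma_1\gamma^{-1}\subseteq \Gamma_2$. Projecting through $p_1$ and $p_2$ produces $G$- and $H$-equivariant maps $G/P_1\to G/P_2$ and $H/Q_1\to H/Q_2$ lying over a common morphism $K/\pi_1(P_1)\to K/\pi_1(P_2)$ in $\Or_{\calfin}(K)$. The naturality square for $\varphi_{(-)}$ then decomposes into the tensor product of the naturality squares for $M$ applied separately to the $G$- and $H$-morphisms, amalgamated over $M(K/\pi_1(P_1))$. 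Commutativity follows from the functoriality of $M$ combined with the commutativity of the original pullback diagram.

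Finally, I would confirm that the natural transformation descends through the equivalence relation introduced in the Convention preceding the lemma. For $\alpha\in M(K/\pi_1(P))$, the actions of $\alpha$ on the two tensor factors are obtained by applying $\pi_1^*$ and $\pi_2^*$; inside $M(\Gamma/\Gamma_1)$ these are further composed with $p_1^*$ and $p_2^*$, and the pullback condition $\pi_1\circ p_1=\pi_2\circ p_2$ equates the two resulting actions. Hence $\varphi_{\Gamma_1}$ identifies $\alpha\cdot\rho_1\otimes\rho_2$ with $\rho_1\otimes\alpha\cdot\rho_2$, so the transformation factors through $(M^G\otimes_{M^K}M^H)(?)$. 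The main obstacle is the bookkeeping in the naturality check: one must translate orbit-category morphisms in $\Or_{\calfin}(\Gamma)$ into compatible triples of morphisms in the orbit categories of $G$, $H$, and $K$, but this is a direct consequence of the bijection in Lemma \ref{lemmafamilyfiniteisotropies} and requires no further input.
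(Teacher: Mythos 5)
Your proposal is correct and follows essentially the same route as the paper's proof: both identify a morphism in $\Or_{\calfin}(\Gamma)$ with a conjugating element of $G\times_K H$, project it via $p_1$ and $p_2$ to compatible morphisms in the orbit categories of $G$ and $H$, and invoke Lemma \ref{lemmafamilyfiniteisotropies} to see that this is compatible with the tensor decomposition of Proposition \ref{pullbackrepresentations}. Your additional check that the map descends through the equivalence relation of the Convention is a detail the paper leaves implicit, but it is a correct and welcome elaboration rather than a departure.
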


\begin{proof}
Given a $G$-map
  $$G\times_KH/P\times_{\pi_1(P)}Q\rightarrow 
  G\times_KH/P'\times_{\pi_1(P')}Q', $$  this map is characterized by an element 
  in   $G\times_KH $ that conjugates $P\times_{\pi_1(P)}Q$ to a subgroup of 
  $P'\times_{\pi_1(P')}Q'$. 
  Now,  taking  the 
  restriction to subconjugate subgroups commutes  with  taking the pullbacks with  respect to $p_1$    and $p_2$ due  to the  structure  lemma \ref{lemmafamilyfiniteisotropies}.
\end{proof}

  Taking  the associated  differential  algebra  structure,  one obtains:

 \begin{proposition}\label{tensorcoefficients}
   Under  the  assumptions  of  lemma  \ref{lemmafamilyfiniteisotropies}, 
  there is a natural isomorphism  of  differential  graded  algebras
  $$
  C^*_G(M)\otimes_{C^*_K(M)}C_H^*(M)\rightarrow C^*_{G\times_K H} (M).
  $$
 \end{proposition}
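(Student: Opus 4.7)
The plan is to reduce the stated isomorphism of differential graded algebras to an isomorphism of commutative rings in degree zero, and then to exploit Proposition \ref{pullbackrepresentations} and Lemma \ref{lemmanaturalequivalencetensor} to compare the relevant inverse limits. By the definition of $C^*_G(M)$ recalled above, each of $C^*_G(M)$, $C^*_H(M)$, $C^*_K(M)$, and $C^*_{G\times_K H}(M)$ is concentrated in degree zero with zero differential, so the assertion reduces to producing a natural ring isomorphism $C^0_G(M)\otimes_{C^0_K(M)} C^0_H(M)\to C^0_{G\times_K H}(M)$. Since $K$ is finite, $K/K$ is terminal in $\Or(K,\calfin)$, hence $C^0_K(M)=M(K/K)$, as already noted in the remark defining $C^*_G(M)$.

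To construct the map I would proceed as follows. By Lemma \ref{lemmafamilyfiniteisotropies}, every finite subgroup $\Gamma_1\leq\Gamma$ has the form $\Gamma_1=P\times_{\pi_1(P)}Q$ with $P\in\calfin(G)$ and $Q\in\calfin(H)$, and Proposition \ref{pullbackrepresentations} identifies
$$M(\Gamma/\Gamma_1)\cong M(G/P)\otimes_{M(K/\pi_1(P))} M(H/Q).$$
The structure morphisms of the limit give ring homomorphisms $C^0_G(M)\to M(G/P)$ and $C^0_H(M)\to M(H/Q)$ that are $M(K/K)$-algebra homomorphisms, and their tensor product composed with the canonical surjection $M(G/P)\otimes_{M(K/K)}M(H/Q)\twoheadrightarrow M(G/P)\otimes_{M(K/\pi_1(P))}M(H/Q)$ produces a map into $M(\Gamma/\Gamma_1)$. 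Lemma \ref{lemmanaturalequivalencetensor} then furnishes naturality in $\Gamma_1$, so these maps assemble into a cone over $M^\Gamma$, inducing the desired ring homomorphism by the universal property of the limit.

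To verify bijectivity, the essential step is to interchange the inverse limit over $\Or(\Gamma,\calfin)$ with the tensor product over $M(K/K)$. Here I would invoke Condition \ref{conditionP}: the projectivity of each $M(\Gamma/\Gamma_1)$ over $M(K/\pi_1(P))$, combined with the finiteness of $K$ (so that the family of subgroups $\pi_1(P)\leq K$ is finite and the cofinal system of relevant $(P,Q)$ pairs is sufficiently well-behaved), permits the commutation
$$\Big(\lim_P M(G/P)\Big)\otimes_{M(K/K)}\Big(\lim_Q M(H/Q)\Big)\;\xrightarrow{\cong}\;\lim_{(P,Q)} M(G/P)\otimes_{M(K/\pi_1(P))} M(H/Q).$$
The right-hand side is $C^0_{G\times_K H}(M)$ by the previous paragraph, and tracing definitions shows that this composite coincides with the map just constructed.

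The principal technical obstacle is precisely this interchange of inverse limit with tensor product: in general such a swap fails, and Condition \ref{conditionP} is imposed exactly so that projectivity allows one to reduce the verification to a compatible family of finite-rank free modules, where the commutation is elementary. Once that step is in place, multiplicativity of the map is automatic since it is assembled from ring homomorphisms and the tensor product of ring maps is a ring map.
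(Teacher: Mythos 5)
Your proposal follows the same basic route as the paper's own proof: reduce to degree zero (all four differential graded algebras are concentrated there with zero differential, so multiplicativity and compatibility of the DG structures are automatic), and then invoke the object-wise identification $M(\Gamma/\Gamma_1)\cong M(G/P)\otimes_{M(K/\pi_1(P))}M(H/Q)$ of Proposition \ref{pullbackrepresentations} together with its naturality from Lemma \ref{lemmanaturalequivalencetensor}. You go one step further than the paper, and rightly so: since $C^0_G(M)$ is \emph{defined} as an inverse limit over $\Or(G,\calfin)$, the proposition really asserts that $\otimes_{M(K/K)}$ commutes with these limits, i.e.\ that the canonical map $\bigl(\lim_P M(G/P)\bigr)\otimes_{M(K/K)}\bigl(\lim_Q M(H/Q)\bigr)\to\lim_{\Gamma_1}M(\Gamma/\Gamma_1)$ is bijective. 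The paper's proof establishes only the term-wise isomorphism and passes over this interchange in silence, so you have correctly located the actual mathematical content.

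The gap is in your justification of that interchange. Condition \ref{conditionP} gives projectivity of each $M(\Gamma/\Gamma_1)$ over $M(K/\pi_1(P))$, which controls the individual vertices of the limit diagram but does not by itself let you pull $\otimes_{M(K/K)}$ through an inverse limit: a limit is a kernel of a map between products, tensoring is only right exact, and the modules whose flatness would actually be relevant are the limits $C^0_G(M)$ and $C^0_H(M)$ themselves, about which Condition \ref{conditionP} says nothing. Your appeal to ``reduction to a compatible family of finite-rank free modules'' is an assertion, not an argument. To close it one would need something extra, e.g.\ finiteness of the diagrams (finitely many conjugacy classes of finite subgroups, which does hold for the crystallographic groups in the applications) together with a splitting of the structure maps of the limit, or a direct verification for $M=\calr$ where all terms are finitely generated free over $\IZ$. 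So: right strategy, and more honest than the paper's two-line proof, but the decisive step is still missing.
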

  \begin{proof} 
 Lemma  \ref{lemmanaturalequivalencetensor} gives  a  natural   module  isomorphism  $M(G/P)\otimes_{M(G/K)} M(G/Q)\cong M(G\times_K H/ K)$. 
  As  the differential  graded  algebra $C^*_K(M)$ is concentrated in  degree zero,   the  differential  module  structure  on  $ C^*_{G\times _K H}(M)$,  respectively $M(G/P)\otimes_{M(G/K)} M(G/Q)$, agree  with   the  ring structure  on  the  tensor  product $M(G/P)\otimes_{M(G/K)} M(G/Q)$.  This  finishes  the  proof. 
  \end{proof}

Note that if $X$ is a proper $G$-CW-complex and $Y$ is a proper $H$-CW-complex, the product $X\times Y$ has a natural structure of $(G\times H)$-CW-complex (the cells correspond to product of cells of $X$ and $Y$).  From this structure we can construct a $(G\times_KH)$-CW-complex structure in  $X\times Y$. Given a $(G\times H)$-equivariant cell $e_\lambda=D^n\times (G\times H/P\times Q)$, set
$$
e_{\lambda,t}=D^n\times (G\times_KH/P\times_{\pi_1(P)}Q)$$
 for $ t\in(G\times H/G\times_KH)/(P\times Q/P\times_H Q).$ 
 Notice that ${C}_{*}^{G\times_K H}(X\times Y)$ can be obtained as the composition
$$
\Or_{\calf\times_K\calf'}(G\times_KH)\xrightarrow{i_\sharp}\Or_{\calf\times\calf'}(G\times H)\xrightarrow{{C}_{*}^{G\times H}(X\times Y)}\mathbb{Z}-\CHAINCOMPLEXES 
 $$
where $i_\sharp$ is the map induced by the inclusion 
$$
i:G\times_KH\rightarrow G\times H.
$$

\begin{proposition}\label{eilenberg-zilber}
  There is an isomorphism  of  $\Or_{\calf\times_K\calf'}(G\times_K H)$-chain complexes 
  $${C}_{*}^{G\times_K H}(X\times Y)\cong 
  i_\sharp({C}_*^G(X)\otimes{C}_*^H(Y)).
  $$
  Moreover, the isomorphism is  compatible  with the  Differential  Graded  Algebra  structure. 
  \end{proposition}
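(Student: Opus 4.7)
The plan is to verify the isomorphism objectwise on $\Or_{\calf\times_K\calf'}(G\times_K H)$ and then promote this to a chain-complex isomorphism of functors, after which the DGA compatibility follows from naturality of the standard Eilenberg-Zilber maps together with Proposition \ref{tensorcoefficients}.

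First I would evaluate both sides on an object $(G\times_K H)/(P\times_{\pi_1(P)}Q)$. By definition of the Bredon chain complex, the left-hand side yields the cellular chain complex of the fixed set $(X\times Y)^{P\times_{\pi_1(P)}Q}$. The crucial observation is that the canonical projections $P\times_{\pi_1(P)}Q\to P$ and $P\times_{\pi_1(P)}Q\to Q$ are both surjective, because by the Structure Lemma \ref{lemmafamilyfiniteisotropies} one has $\pi_1(P)=\pi_2(Q)$ and $\pi_1\vert_P$, $\pi_2\vert_Q$ are surjective onto this common image. Hence
$$(X\times Y)^{P\times_{\pi_1(P)}Q}= X^P\times Y^Q.$$
On the other hand, the right-hand side evaluated at the same object, using the definition of $i_\sharp$ together with $C_*^G(X)(G/P)=C_*(X^P)$ and $C_*^H(Y)(H/Q)=C_*(Y^Q)$, returns $C_*(X^P)\otimes C_*(Y^Q)$. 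The classical cellular Eilenberg-Zilber theorem supplies a natural isomorphism
$$\mathrm{EZ}\colon C_*(X^P\times Y^Q)\xrightarrow{\;\cong\;} C_*(X^P)\otimes C_*(Y^Q),$$
identifying product cells on the left with tensor products of cells on the right.

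Next I would check naturality in the orbit category. A morphism $(G\times_K H)/(P_1\times_{\pi_1(P_1)}Q_1)\to (G\times_K H)/(P_2\times_{\pi_1(P_2)}Q_2)$ is represented by an element $(g,h)\in G\times_K H$ subconjugating the first isotropy into the second. Left multiplication by $(g,h)$ on $X\times Y$ preserves the product structure, so it acts on fixed points as the product of left multiplication by $g$ on $X^{P_1}\to X^{P_2}$ and by $h$ on $Y^{Q_1}\to Y^{Q_2}$. Since $\mathrm{EZ}$ is natural in each factor, this squares commute, and so the family of pointwise isomorphisms assembles into an isomorphism of $\Or_{\calf\times_K\calf'}(G\times_K H)$-chain complexes. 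Functoriality of the inclusion $i\colon G\times_K H\hookrightarrow G\times H$ ensures this is exactly $i_\sharp$ applied to $C_*^G(X)\otimes C_*^H(Y)$.

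For the DGA compatibility, the cup product on $C^*_{G\times_K H}(X\times Y,M)$ arises from a cellular diagonal approximation on $X\times Y$ paired with the ring structure of $M$; under $\mathrm{EZ}$ this factors through the Alexander-Whitney map and corresponds to the tensor product of the cup products on $C^*_G(X,M)$ and $C^*_H(Y,M)$, with the coefficients identified via Proposition \ref{tensorcoefficients} as $M^{\Gamma}\cong M^G\otimes_{M^K}M^H$. Together with the Koszul sign convention this gives the DGA isomorphism asserted. The main obstacle is the careful bookkeeping of signs in Alexander-Whitney and in tracking that the product $(G\times H)$-CW-structure, when refined to a $(G\times_K H)$-CW-structure via the indexing $t\in (G\times H/G\times_K H)/(P\times Q/P\times_{\pi_1(P)}Q)$ described before the statement, is precisely the structure whose cellular chains are computed by $i_\sharp$; the Structure Lemma \ref{lemmafamilyfiniteisotropies} is what makes this decomposition work cleanly.
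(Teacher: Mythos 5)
Your proposal is correct and follows essentially the same route as the paper: identify the isotropy/fixed-point data via the second part of Lemma \ref{lemmafamilyfiniteisotropies}, apply the cellular Eilenberg--Zilber identification of $C_*(X^P\times Y^Q)$ with $C_*(X^P)\otimes C_*(Y^Q)$ naturally over the orbit category, and observe that the differential graded structure is preserved because $C^*_K(M)$ is concentrated in degree zero. Your version is somewhat more careful than the paper's (which only asserts a chain homotopy equivalence while the statement claims an isomorphism), but the underlying argument is the same.
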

  
  \begin{proof}
 The identification  of  the isotropy  groups  of the  second  part of  Lemma \ref{lemmafamilyfiniteisotropies}, and  the  usual  Eilenberg-Zilber argument  identify  up  to   chain  homotopy  eqivalence the  chain  complexes over the  orbit  category    $${C}_{*}^{G\times H}(X\times Y)\cong {C}_{*}^G(X)\otimes {C}_{*}^H(Y)$$
as $\Or_{\calf\times\calf'}(G\times H)$-chain complexes.

The differential  graded  structure  is  preserved  since the   differential  graded algebra  $C_K^*(M)$ is  concentrated  in degree  zero and  the  cup  product  agrees  with  the module  structure over  the commutative  ring $M(K/K)$. 

 \end{proof}

We  can  refine  Proposition \ref{pullbackrepresentations} to  an  isomorphism  of  differential  graded  algebras: 

  \begin{proposition} \label{eilenberg-zilber1}
  There is an 
  isomorphism of 
  differential graded algebras
    \begin{align*} 
    \Hom
    (i_\sharp&({C}_*(X)\otimes {C}_*(Y)),
    C^*_G(M)\otimes_{C^*_K(M)}C_H^*(M))\xrightarrow{\cong}&\\& {C}^*_G (X, M)\otimes_{{C}^*_K(M) }{C}^*_H (Y, M).
    \end{align*}

  \end{proposition}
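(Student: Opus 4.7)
The plan is to reduce the claim to the combination of Propositions \ref{tensorcoefficients} and \ref{eilenberg-zilber} by showing that both the left-hand side and the right-hand side are canonically isomorphic to the Bredon cochain complex $C^*_{G\times_K H}(X\times Y, M)$. On the left, Proposition \ref{eilenberg-zilber} rewrites the chain complex $i_\sharp(C_*(X)\otimes C_*(Y))$ as $C_*^{G\times_K H}(X\times Y)$, while Proposition \ref{tensorcoefficients} rewrites the coefficient differential graded algebra $C^*_G(M)\otimes_{C^*_K(M)} C^*_H(M)$ as $C^*_{G\times_K H}(M)$. Substituting both identifications turns the $\Hom$-complex on the left into $C^*_{G\times_K H}(X\times Y, M)$ by the very definition of Bredon cochains.

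For the right-hand side, I would construct the comparison map cell by cell. Given equivariant cells $e_\lambda\subset X$ and $e_\mu\subset Y$ of orbit types $G/P_\lambda$ and $H/Q_\mu$, Lemma \ref{lemmafamilyfiniteisotropies} identifies the product cell's orbit type as $(G\times_K H)/(P_\lambda\times_{\pi_1(P_\lambda)}Q_\mu)$. Proposition \ref{pullbackrepresentations} then supplies the natural module isomorphism $M(\Gamma/P_\lambda\times_{\pi_1(P_\lambda)}Q_\mu)\cong M(G/P_\lambda)\otimes_{M(K/\pi_1(P_\lambda))} M(H/Q_\mu)$. Applying $\Hom_\IZ$ and taking the product over all orbit representatives assembles, after passing to the quotient defining $(M^G\otimes_{M^K} M^H)(?)$ in the convention preceding Lemma \ref{lemmanaturalequivalencetensor}, into a comparison morphism from $C^*_G(X,M)\otimes_{C^*_K(M)} C^*_H(Y,M)$ to $C^*_{G\times_K H}(X\times Y, M)$. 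The projectivity built into Condition \ref{conditionP} is what guarantees that this cell-wise map is an isomorphism in each degree.

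Next I would check that this comparison respects the full differential graded algebra structure. Differentials on both sides are inherited from the cellular boundary maps on $X$, $Y$, and $X\times Y$, and they agree under the Eilenberg--Zilber identification of Proposition \ref{eilenberg-zilber}. Since $C^*_K(M)$ is concentrated in degree zero, the tensor product over it reduces in each bidegree to a plain tensor product of $M(K/K)$-modules, making the Leibniz rule automatic. The cup product on the cochains of $X\times Y$ is induced by the cellular diagonal together with the ring structure of $M$, and under the cell-wise identification it decomposes as the tensor product of the cup products on $X$ and $Y$ with the appropriate Koszul sign.

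The main obstacle is precisely this last step: lifting the pointwise module identification of Proposition \ref{pullbackrepresentations} from an isomorphism of graded abelian groups to one of differential graded algebras. This requires simultaneous verification of naturality across morphisms in $\Or_{\calf\times_K\calf'}(G\times_K H)$, compatibility with both differentials via the Eilenberg--Zilber shuffle, and agreement of cup products through the diagonal; the projectivity hypothesis in Condition \ref{conditionP} is what prevents higher Tor contributions from the coefficient tensor product from obstructing this identification.
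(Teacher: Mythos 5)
Your argument is correct and follows essentially the same route as the paper's proof: a cell-wise (degree-wise) identification of both sides with $\bigoplus_{\lambda,\mu} M(G/P_\lambda)\otimes_{M(K/\pi_1(P_\lambda))}M(H/Q_\mu)$ via Lemma \ref{lemmafamilyfiniteisotropies} and Proposition \ref{pullbackrepresentations}, followed by the check that coboundaries and products are compatible; your detour through $C^*_{G\times_K H}(X\times Y,M)$ is only a cosmetic repackaging of what the paper writes out directly in each degree $n$. The one inaccuracy is your appeal to the projectivity of Condition \ref{conditionP} to make the cell-wise map an isomorphism (and to rule out ``higher Tor contributions''): that isomorphism already follows from Proposition \ref{pullbackrepresentations} together with the fact that each $\IZ[e_\lambda]$ is free of rank one, and projectivity is needed only later, for the collapse of the Eilenberg--Moore spectral sequence.
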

  \begin{proof}
  Notice that in degree $n$ the left hand side cochain complex is
  $$
  \bigoplus_{\lambda,
  \mu}\Hom_\IZ(\IZ[e_\lambda]\otimes_\IZ\IZ[f_\mu],M(G/P_\lambda)
  \otimes_{M(K/\pi_1(P_\lambda))}M(H/Q_\mu)),
  $$
  where $e_\lambda$ denotes a cell in $X$ and $f_\mu$ denotes a cell in $Y$, and 
  the sum is taken over the pairs $\lambda$ and $\mu$ such that 
  $\dim(e_\lambda)+dim(f_\mu)=n$. Note that $\IZ[e_\lambda]\otimes_\IZ\IZ[f_\mu]$ 
  is isomorphic as abelian group to $\IZ$. 
    Then,    each summand in the direct sum is isomorphic to 
    $$
    M(G/P_\lambda)
  \otimes_{M(K/\pi_1(P_\lambda))}M(H/Q_\mu),
  $$
   and the left hand  side cochain complex in degree $n$ 
  is isomorphic to
  $$
  \bigoplus_{\lambda, \mu}M(G/P_\lambda)
  \otimes_{M(K/\pi_1(P_\lambda))}M(H/Q_\mu).
  $$
  The right hand side cochain complex in degree $n$ is 
  $$
  \bigoplus_{\lambda}\Hom_\IZ(\IZ[e_\lambda],M(G/P_\lambda))
  \otimes_{M(K/\pi_1(P_\lambda))}M(H/Q_\mu).
  $$
  Using \ref{eilenberg-zilber},  this term is isomorphic to
  $M(G/P_\lambda)\otimes_{M(K/\pi_1(P_\lambda))}M(H/Q_\mu)$
 and the coboundary maps  are compatible with  the  isomorphism. 
   \end{proof}
  
Recall the  construction  of  the  Eilenberg-Moore   spectral sequence, page  241  in Chapter  7  of  \cite{McCleary2001}. 

  \begin{theorem}\label{spectral}[First Eilenberg-Moore  Theorem]
  Let  $A$ be  a  differential  graded  algebra over  the  ring $R$,  let  $M$  and  $N$  be   
  differential  graded  $A$-modules. Assume  $A$  and the  graded  $R$-Module  of  the  homology of  $A$, $H(A)$  are  flat  modules  
  over $R$. Then,  there  is  a  second  quadrant spectral  sequence  with 
$$
E_2^{p,q} = \Tor _{H(A)}^{p,q}(H(M), H(N)) 
$$
 converging  to  $\Tor_A^{p,q}(M,N).$
  \end{theorem}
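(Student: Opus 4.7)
The plan is to build the two-sided bar complex $B(M,A,N)$ and endow it with a filtration whose associated spectral sequence has the claimed $E_2$ page. Recall that $B(M,A,N)$ is the bicomplex whose $(p,q)$-piece is $M \otimes_R A^{\otimes p} \otimes_R N$ (with internal grading $q$ coming from the grading on $M$, $A$, $N$), equipped with two commuting differentials: the internal differential $d_{\text{int}}$ coming from the differentials of $M$, $A$, $N$, and the bar differential $d_{\text{bar}}$ of degree $-1$ in $p$ given by the usual alternating formula using the module actions and the multiplication in $A$. The total complex $\operatorname{Tot}(B(M,A,N))$ computes $\Tor_A^{\ast,\ast}(M,N)$ because $B(-,A,N)$ furnishes a resolution of $N$ by relatively free DG $A$-modules; this is the standard fact recalled on page~241 of \cite{McCleary2001}.

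Next I would filter $\operatorname{Tot}(B(M,A,N))$ by the bar degree, $F^p = \bigoplus_{p'\ge p} M \otimes_R A^{\otimes p'} \otimes_R N$. Since all bar degrees are nonnegative, the filtration is bounded below and exhaustive, so the associated spectral sequence converges to the homology of the total complex, i.e.\ to $\Tor_A^{p,q}(M,N)$. The $E_0$ page is $M \otimes_R A^{\otimes p} \otimes_R N$ with differential $d_{\text{int}}$, which is where the flatness hypothesis enters: since $A$ is flat over $R$, so is every tensor power $A^{\otimes p}$, and hence a Künneth-type argument identifies
\begin{equation*}
E_1^{p,q} \;\cong\; H(M) \otimes_R H(A)^{\otimes p} \otimes_R H(N)
\end{equation*}
in total degree $q$. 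The induced $d_1$ differential on $E_1$ is, by naturality, precisely the bar differential for the triple $(H(M), H(A), H(N))$; here the second flatness hypothesis (flatness of $H(A)$ over $R$) guarantees that $H(A)^{\otimes p}$ really computes the bar construction for $H(A)$ and therefore
\begin{equation*}
E_2^{p,q} \;\cong\; \Tor_{H(A)}^{p,q}(H(M), H(N)),
\end{equation*}
as desired.

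The hard part, and the reason both flatness hypotheses are imposed, is the $E_0 \Rightarrow E_1$ step: one needs that taking homology of the internal complex commutes with the iterated tensor product $M \otimes_R A^{\otimes p} \otimes_R N$. I would handle this by the standard argument that flatness of $A$ over $R$ makes $A^{\otimes p}$ flat, so the Künneth spectral sequence for each tensor factor collapses to an isomorphism; the analogous statement at the $E_1 \to E_2$ stage uses flatness of $H(A)$. Convergence is then routine: the filtration is bounded below, lives in the second quadrant, and is exhaustive and complete (in each total degree only finitely many filtration pieces contribute), so the spectral sequence abuts strongly to $\Tor_A^{p,q}(M,N)$.
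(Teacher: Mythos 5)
The paper does not actually prove Theorem \ref{spectral}: it is quoted from page 241 of Chapter 7 of \cite{McCleary2001} and used as a black box, so there is no in-paper argument to measure yours against. Your bar-construction proof is the standard one and is essentially what the cited source does: McCleary filters the total complex of a proper projective resolution of $M$ by resolution degree, and the two-sided bar complex $B(M,A,N)$ is precisely the canonical instance of that construction (it arises from the bar resolution $B(M,A,A)\to M$), with your filtration by bar degree being the same filtration. So your route is the right one and tracks the cited reference rather than diverging from it.

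Two steps in your sketch need more care than you give them. First, the convergence remark ``in each total degree only finitely many filtration pieces contribute'' is false in general: an element of fixed total degree can receive contributions from $M\otimes_R A^{\otimes p}\otimes_R N$ for infinitely many $p$ unless one imposes connectivity or boundedness hypotheses. The correct statement is that the increasing filtration $F_p=\bigoplus_{p'\le p}M\otimes_R A^{\otimes p'}\otimes_R N$ is exhaustive and bounded below, so the classical convergence theorem for filtered complexes applies, and the abutment is by definition the homology of this very total complex (that is what differential $\Tor_A(M,N)$ means here); no degreewise finiteness of the filtration is needed or available. Second, your K\"unneth collapse at $E_1$ is fine for the factors $A^{\otimes p}$ (a flat complex with flat homology can be peeled off against an arbitrary complex, since all higher $\Tor$ terms in the K\"unneth spectral sequence vanish), but at $p=0$ the claim reads $H(M\otimes_R N)\cong H(M)\otimes_R H(N)$, which does not follow from flatness of $A$ and $H(A)$ alone; one needs $R$-flatness of $M$ or $N$ and of one of their homologies, or a normalized bar construction with different bookkeeping. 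This gap is inherited from the rather terse hypotheses of the quoted theorem and is harmless in the paper's application, where $R=\IZ$ and all the Bredon cochain groups and representation rings involved are free abelian.
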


Specializing  to  the Bredon  cochain complex  and  the  differential  graded  module  structure,  we  have

  \begin{theorem}\label{eilenbergmoore}
  Let  $\Gamma= G\times_K H$  be  a  group  satisfying  condition 
  \ref{diagrampullback}. Let  $M$  be  a  contravariant  Bredon Functor  taking  values on the  category  of  commutative  rings.   Assume  that  $X$ is  a proper  $G$-CW  complex  and $Y$ is  
  a  proper  $H$-CW  complex. Then, there  is  a  spectral  sequence  with  $E_2$  
  term  given  by  
  $$ 
  \Tor_{H^*(C^*_K(M))}^{p,q}(H_\calf^*(X,M),H_{\calf'}^*(Y,M))  
  $$ 
  which  converges  to 
  $$
  \Tor_{C^*_K(M)}^{p,q}(\underbar{C}_G^*(X,M),
  \underbar{C}_H^*(Y,M)).
  $$ 
  Notice  that, as  the  differential graded  algebra 
  $C^*_K(M)$ is  concentrated  in degree  0  and  has  no  differentials, the  $E_2$  
  term  can be  identified  with  
  $$
  \Tor_{C^*_K(M)}^{p,q}
  (H_\calf^*(X,M),H_{\calf'}^*(Y,M)).
  $$
 \end{theorem}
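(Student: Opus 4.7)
The plan is to apply the First Eilenberg--Moore Theorem \ref{spectral} to the differential graded algebra $A = C^*_K(M)$ and the two differential graded $A$-modules $\underbar{C}_G^*(X,M)$ and $\underbar{C}_H^*(Y,M)$. The $A$-module structures on the Bredon cochain complexes are induced by the maps $\pi_1$ and $\pi_2$ of the pullback diagram in Condition \ref{diagrampullback}, which turn each $M(G/P)$ and $M(H/Q)$ into a module over $M(K/K) = C^0_K(M)$ via the ring homomorphisms $\pi_1^*$ and $\pi_2^*$. Since $C^*_K(M)$ is concentrated in degree $0$ and has vanishing differential, these module structures are forced to be compatible with the cup product and the coboundary.

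The first step is to identify the abutment. Proposition \ref{eilenberg-zilber1}, combined with Proposition \ref{eilenberg-zilber}, yields an isomorphism of differential graded algebras
$$
\underbar{C}_G^*(X,M)\otimes_{C^*_K(M)}\underbar{C}_H^*(Y,M) \;\cong\; \underbar{C}^*_{G\times_K H}(X\times Y, M),
$$
so that the derived tensor product $\Tor_{C^*_K(M)}^{p,q}(\underbar{C}_G^*(X,M),\underbar{C}_H^*(Y,M))$ computed in Theorem \ref{spectral} recovers the target of the spectral sequence as stated. This identification uses in an essential way the Structure Lemma \ref{lemmafamilyfiniteisotropies}, which describes the isotropies of the diagonal $\Gamma$-action on $X\times Y$.

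The second step is to verify the flatness hypothesis in Theorem \ref{spectral}, namely that $A = C^*_K(M)$ and $H(A)$ are flat over the coefficient ring $M(K/K)$. Here the assumption is trivially satisfied because $A$ is concentrated in degree zero and equals $M(K/K)$ viewed as a module over itself, so both $A$ and $H(A)$ are free. The projectivity condition \ref{conditionP} is what guarantees that the modules $\underbar{C}_G^*(X,M)$ and $\underbar{C}_H^*(Y,M)$ themselves are built (cell by cell) from projective, hence flat, $M(K/K)$-modules, which is what makes the $\Tor$-groups well-behaved and lets the $E_2$-identification go through.

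The third step is to read off the $E_2$-term. By construction, the cohomology of $\underbar{C}_G^*(X,M)$ (respectively $\underbar{C}_H^*(Y,M)$) is $H^*_\calf(X,M)$ (respectively $H^*_{\calf'}(Y,M)$), and because $C^*_K(M)$ is concentrated in degree zero with zero differential, we have $H(A) = A = M(K/K)$. Substituting into the $E_2$-page provided by Theorem \ref{spectral} gives
$$
E_2^{p,q} = \Tor_{M(K/K)}^{p,q}\bigl(H^*_\calf(X,M),\,H^*_{\calf'}(Y,M)\bigr),
$$
as claimed. The main obstacle I expect is the careful bookkeeping in the second step: one must check that the cellular filtration of $X$ and $Y$ produces, via Proposition \ref{pullbackrepresentations} and Lemma \ref{lemmanaturalequivalencetensor}, cochain modules that are termwise projective over $M(K/K)$, so that the hypotheses of Theorem \ref{spectral} apply and the spectral sequence is indeed convergent.
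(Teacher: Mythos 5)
Your proof follows exactly the route the paper takes: the paper derives Theorem \ref{eilenbergmoore} by directly specializing the First Eilenberg--Moore Theorem \ref{spectral} to $A=C^*_K(M)$ with the Bredon cochain complexes as differential graded modules, which is precisely your plan. Your added verifications (identifying the abutment via Propositions \ref{eilenberg-zilber} and \ref{eilenberg-zilber1}, noting the flatness hypothesis is automatic since $C^*_K(M)$ is concentrated in degree zero, and using Condition \ref{conditionP} for the projectivity of the cochain modules) are consistent with, and somewhat more explicit than, what the paper leaves implicit.
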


\begin{proposition}
Denote by  $M$  the Bredon Functor given  by  the  representation  ring. Then, ${C}_G^*(X,M)$ is a $C^*_G(M)$-projective module.
\end{proposition}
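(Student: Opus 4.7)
The overall strategy is to reduce projectivity to a pointwise statement on cells and then invoke the structure of the representation ring functor on the orbit category.

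First, I would use the cellular description of Bredon cochains recalled earlier in the section. For a $G$-CW-complex $X$ with orbit representatives $\{e_\lambda\}$ of $n$-cells and corresponding stabilizers $P_\lambda$, one has
\[
C_G^n(X,M) \;\cong\; \prod_\lambda \Hom_\IZ(\IZ[e_\lambda],M(G/P_\lambda))\;\cong\;\bigoplus_\lambda R_\IC(P_\lambda).
\]
Because the class of projective modules over the commutative ring $C^0_G(M)$ is closed under direct sums, the problem reduces to showing that for each finite subgroup $P\leq G$, the module $M(G/P)=R_\IC(P)$ is projective over $C^*_G(M)$.

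Next, I would identify the module structure explicitly. Since $C^*_G(M)$ is concentrated in degree zero with trivial differential, it coincides with the commutative ring
\[
C^0_G(M)\;=\;\varprojlim_{F\in\Or_\calfin(G)}R_\IC(F),
\]
and it acts on $R_\IC(P)$ through the canonical projection from the inverse limit. The plan is to construct a ring-theoretic splitting of this projection, exhibiting $R_\IC(P)$ as a direct summand of $C^0_G(M)$ and hence as a projective module. The splitting would be assembled from the decomposition of $R_\IC(P)$ along the central idempotents of $\IC[P]$ coming from its irreducible characters, together with induction maps $\mathrm{Ind}_P^{F}\colon R_\IC(P)\to R_\IC(F)$ for every finite $P\leq F\leq G$, which by Frobenius reciprocity provide a coherent family compatible with the transition morphisms of the inverse system.

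The main obstacle lies precisely in producing that coherent system of lifts. Splitting a restriction map $R_\IC(F)\to R_\IC(P)$ at a single finite level $F$ is generally not possible (even for $F$ finite and $P=\{e\}$), so the argument must genuinely exploit the inverse-limit structure: it is the totality of all finite subgroups containing $P$, not any single one, that yields enough room to assemble a global section. Once the splitting is in place, $R_\IC(P)$ becomes a retract of $C^0_G(M)$, summing up to the desired $C^*_G(M)$-projectivity of $C_G^*(X,M)$.
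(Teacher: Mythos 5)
Your opening reduction --- decompose $C_G^n(X,M)$ cell by cell and reduce to the projectivity of each $M(G/P_\lambda)=R_{\IC}(P_\lambda)$ over the degree-zero ring $C^0_G(M)$ --- coincides with the paper's first step (the paper phrases it via the Yoneda lemma, identifying $\Hom_{\Or_{\calfin(G)}}(\bigoplus_\lambda \IZ[e_\lambda],M)$ with the corresponding product of the $M(G/P_\lambda)$). After that the two arguments part ways, and yours has a genuine gap. The paper concludes by invoking the algebraic structure of the representation ring itself (semisimplicity via Artin--Wedderburn); it never attempts to realize $R_{\IC}(P)$ as a retract of the ring $C^0_G(M)$. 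Your plan rests on producing a $C^0_G(M)$-linear section of the projection $C^0_G(M)\to R_{\IC}(P)$ assembled from induction maps, and this cannot work as described: Frobenius reciprocity only makes $\operatorname{Ind}_P^F$ an $R_{\IC}(F)$-module map, while the composite $\operatorname{Res}_P^F\circ\operatorname{Ind}_P^F$ is computed by the Mackey double-coset formula and is never the identity (for $P=\{e\}$ it is multiplication by $|F|$). No normalization turns these maps into sections, and passing to the inverse limit over all finite $F\supseteq P$ only imposes additional compatibility constraints; it does not manufacture sections that fail to exist at every finite level. You flag this coherence problem as ``the main obstacle'' and leave it unresolved, so the proof is incomplete precisely at its load-bearing step.

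Worse, the intermediate statement you aim for --- $R_{\IC}(P)$ is a retract, hence projective, by a pointwise argument along the transition maps --- fails in the simplest cases: $R_{\IC}(\{e\})\cong\IZ$ with the restriction action is the trivial module over $R_{\IC}(C_2)\cong\IZ[C_2]$, and $\operatorname{Ext}^2_{\IZ[C_2]}(\IZ,\IZ)\cong H^2(C_2;\IZ)\neq 0$, so it is not projective there. Any correct argument must therefore exploit something beyond the restriction/induction formalism of the inverse system --- which is exactly where the paper (however tersely) turns to the internal structure of the representation ring rather than to a retraction.
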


\begin{proof}
The  cochain  complex ${C}_G^n(X,M)$ in degree $n$ is  isomorphic  to  a module   of  the  form 
$${\rm }Hom(\bigoplus_\lambda \IZ[e_\lambda], M(?)),$$
where  $e_\lambda$ is  an orbit   cell  of the  type  $G/H_\lambda \times D^n$. 
Using  the  Yoneda  lemma, this  is  isomorphic  to  $\lim_{H_\lambda} M(G/H_\lambda)$,  where the  limit  is  taken with respect  to  $G$-maps  $G/H\to G/K$. 
Since  the  representation  ring  is  semisimple due  to the  Schur-Artin-Wedderburn theorem,  this is  a projective  module  over $C^*_G(M)$.
\end{proof}

 If  conditions \ref{conditionP} are satisfied,  ${C}_G^*(X,M)$ is a projective $C^*_K(M)$-module.
 
  In this case, the spectral sequence of  theorem \ref{eilenbergmoore} collapses at level 2 with
$$\Tor_{C^*_K(M)}^{p,q}(H_\calf^*(X,M),H_{\calf'}^*(Y,M))\cong H_\calf^p(X,M)\otimes_{C^*_K(M)}H_{\calf'}^q(Y,M)),$$
and
$$\Tor_{C^*_K(M)}^{p,q}({C}_G^*(X,M),{C}_H^*(Y,M))\cong H_p({C}_G^*(X,M))\otimes_{C^*_K(M)}{C}_H^q(Y,M)).$$

Proposition \ref{tensorcoefficients}, \ref{eilenberg-zilber}, and \ref{eilenberg-zilber1} yield

  \begin{theorem}[Bredon cohomology of pullbacks]\label{kunneth}
  If conditions \ref{conditionP} are satisfied, there is an isomorphism of   
  $C^*_K(M)$-modules
  $$
  H^*_{\calf\times_K\calf'}(X\times Y,M)\cong  
  H_\calf^*(X,M)\otimes_{C^*_K(M)}H_{\calf'}^*(Y,M).
  $$
\end{theorem}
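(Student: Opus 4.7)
The plan is to invoke the Eilenberg-Moore spectral sequence of Theorem \ref{eilenbergmoore} together with the projectivity guaranteed by condition \ref{conditionP}, so that both sides of the claimed isomorphism collapse to a single tensor product and can then be matched via the structural isomorphisms already established.

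First, I would apply Theorem \ref{eilenbergmoore} to the pair $(X,Y)$ to obtain a spectral sequence with $E_2$-term $\Tor_{C^*_K(M)}^{p,q}(H_\calf^*(X,M),H_{\calf'}^*(Y,M))$ converging to $\Tor_{C^*_K(M)}^{p,q}({C}_G^*(X,M),{C}_H^*(Y,M))$. Here I use that $C^*_K(M)$ is concentrated in degree zero, so that $H^*(C^*_K(M))=C^*_K(M)$ and no distinction between the algebra and its cohomology arises. The key observation is that the preceding proposition shows $C_G^*(X,M)$ is projective over $C^*_G(M)$, and condition \ref{conditionP} upgrades this (via the ring map $\pi_1^*$) to projectivity as a $C^*_K(M)$-module; the same applies on the $H$-side. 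Consequently all higher $\Tor$ groups vanish, the spectral sequence collapses at $E_2$, and both the $E_2$-term and the abutment reduce to ordinary tensor products over $C^*_K(M)$.

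Second, I would identify the abutment with Bredon cohomology of the product. Proposition \ref{eilenberg-zilber} identifies the Bredon chain complex ${C}_*^{G\times_K H}(X\times Y)$ with $i_\sharp({C}_*^G(X)\otimes {C}_*^H(Y))$, and Proposition \ref{eilenberg-zilber1} then identifies the dual cochain complex with ${C}_G^*(X,M)\otimes_{C^*_K(M)}{C}_H^*(Y,M)$ as differential graded algebras, making crucial use of Proposition \ref{tensorcoefficients} to compare the coefficient systems. Taking cohomology and invoking the collapse from the previous step, this yields
\[
H^*_{\calf\times_K\calf'}(X\times Y,M) \;\cong\; H\bigl({C}_G^*(X,M)\otimes_{C^*_K(M)}{C}_H^*(Y,M)\bigr) \;\cong\; H_\calf^*(X,M)\otimes_{C^*_K(M)}H_{\calf'}^*(Y,M),
\]
which is the claimed isomorphism.

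The main obstacle I anticipate is verifying that the collapse of the spectral sequence really does furnish the tensor-product identification on the nose and not merely up to a filtration with associated graded equal to the tensor product. Because one of the factors is projective over $C^*_K(M)$, the spectral sequence degenerates to a single row (or column), so the filtration has only one nontrivial quotient and the extension problem is trivial; this should be spelled out carefully. A secondary technical point is checking that the projectivity hypothesis of condition \ref{conditionP} passes correctly through the cochain-level identification of Proposition \ref{eilenberg-zilber1}, i.e.\ that the $C^*_K(M)$-module structure inherited by $C_G^*(X,M)$ from the pullback is the same one for which projectivity is asserted; this follows from the naturality statement in Lemma \ref{lemmanaturalequivalencetensor}.
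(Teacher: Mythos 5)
Your proposal is correct and follows essentially the same route as the paper: projectivity of the Bredon cochain complexes over $C^*_K(M)$ (via condition \ref{conditionP}) forces the Eilenberg--Moore spectral sequence of Theorem \ref{eilenbergmoore} to collapse at $E_2$ with vanishing higher $\Tor$, and Propositions \ref{tensorcoefficients}, \ref{eilenberg-zilber} and \ref{eilenberg-zilber1} identify the abutment with $H^*_{\calf\times_K\calf'}(X\times Y,M)$. Your additional remarks on the triviality of the extension problem and on matching the $C^*_K(M)$-module structures are care the paper leaves implicit, but they do not change the argument.
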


When we take rational coefficients, the  spectral sequence constructed  above collapses and we obtain a K\"unneth formula.
 Let $M^\IQ(?)$ be the functor $M$ with rational coefficients i.e. $M^\IQ(G/H)=M(G/H)\otimes_\IZ\IQ$.

  \begin{corollary}[Rationalized Bredon cohomology of pullbacks]\label{kunnethrational}
  If conditions \ref{conditionP} are satisfied, there is an isomorphism of   
  $M^\IQ$-modules
$$
H^*_{\calf\times_K\calf'}(X\times Y,M^\IQ)\cong  
  H_\calf^*(X,M^\IQ)\otimes_{M^\IQ(K/K)}H_{\calf'}^*(Y,M^\IQ).$$
  \end{corollary}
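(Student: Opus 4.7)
The plan is to derive Corollary \ref{kunnethrational} directly from the integral K\"unneth Theorem \ref{kunneth} applied to the rationalized coefficient system $M^\IQ$.

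First, I would check that $M^\IQ$ inherits condition \ref{conditionP} from $M$. Since tensoring with $\IQ$ over $\IZ$ is exact and preserves projectivity of modules, the hypothesis that $M(\Gamma/P)$ is projective over $M(K/\pi_1(P))$ for every finite $P\le\Gamma$ translates without change to the rational setting: $M^\IQ(\Gamma/P)=M(\Gamma/P)\otimes_{\IZ}\IQ$ is projective as a module over $M^\IQ(K/\pi_1(P))=M(K/\pi_1(P))\otimes_{\IZ}\IQ$. Hence Theorem \ref{kunneth} is applicable with coefficients in $M^\IQ$ and yields
\[
H^*_{\calf\times_K\calf'}(X\times Y,M^\IQ)\cong H_\calf^*(X,M^\IQ)\otimes_{C^*_K(M^\IQ)}H_{\calf'}^*(Y,M^\IQ).
\]

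Second, I would identify the DGA $C^*_K(M^\IQ)$ with the ordinary ring $M^\IQ(K/K)$ sitting in degree zero. Because $K$ is a finite group and $K/K$ is terminal in $\Or(K,\calfin)$, the inverse limit defining $C^0_K(M^\IQ)$ collapses to $M^\IQ(K/K)$, while by definition $C^i_K(M^\IQ)=0$ for $i\neq 0$ and the differentials vanish. Consequently, a differential graded module over $C^*_K(M^\IQ)$ is nothing more than an ordinary $M^\IQ(K/K)$-module, and the derived tensor product over the DGA degenerates to the plain tensor product over $M^\IQ(K/K)$.

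Combining these two identifications rewrites the right-hand side as $H_\calf^*(X,M^\IQ)\otimes_{M^\IQ(K/K)}H_{\calf'}^*(Y,M^\IQ)$, which is exactly the asserted formula. I do not anticipate a real obstacle here: the entire content is packaged in Theorem \ref{kunneth}, and the rational statement is essentially a reformulation in which the passage from the DGA $C^*_K(M^\IQ)$ to the ring $M^\IQ(K/K)$ is made explicit. The only point demanding some care is the naturality of all the identifications as $M^\IQ(K/K)$-modules, which follows from the functoriality of the constructions of Propositions \ref{pullbackrepresentations}, \ref{eilenberg-zilber}, and \ref{eilenberg-zilber1} under the exact base change $-\otimes_{\IZ}\IQ$.
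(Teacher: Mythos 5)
Your proof is correct and follows essentially the same route as the paper, which justifies the corollary only by noting that under condition \ref{conditionP} the Eilenberg--Moore spectral sequence collapses --- i.e.\ by applying Theorem \ref{kunneth} to the rationalized coefficient system, exactly as you do. Your explicit checks that projectivity in condition \ref{conditionP} is preserved under the base change $-\otimes_{\IZ}\IQ$ and that $C^*_K(M^\IQ)$ reduces to $M^\IQ(K/K)$ in degree zero supply details the paper leaves implicit, but the argument is the same.
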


In order to apply Theorem \ref{kunneth} in the  following  section, we will  need the following elementary lemma.
 \begin{lemma}\label{restriction of coefficients}
 Let 
 $$
 0\rightarrow A\rightarrow B\rightarrow C\rightarrow0
 $$
 be an exact sequence of projective $R$-modules and $I$be an ideal in $R$, then, the 
  sequence
  $$0\rightarrow A/I\rightarrow B/I\rightarrow C/I\rightarrow0$$
  is exact.
  \end{lemma}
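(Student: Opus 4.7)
The plan is to identify $M/I$ with the tensor product $M \otimes_R R/I$ and then reduce the lemma to a standard property of this functor. Since $-\otimes_R R/I$ is always right exact, the portion $A/I \to B/I \to C/I \to 0$ is automatic; the only real content of the lemma is the injectivity of the induced map $A/I \to B/I$.

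First I would exploit the projectivity of $C$. Because $C$ is projective, the short exact sequence $0 \to A \to B \to C \to 0$ splits, yielding an $R$-module isomorphism $B \cong A \oplus C$ compatible with the given inclusion $A \hookrightarrow B$ and projection $B \twoheadrightarrow C$. Since $-\otimes_R R/I$ commutes with finite direct sums, reducing modulo $I$ produces $B/I \cong A/I \oplus C/I$, from which the short exact sequence $0 \to A/I \to B/I \to C/I \to 0$ is immediate.

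An alternative route, which uses only flatness of $C$, proceeds via the long exact $\Tor$ sequence. Tensoring the original short exact sequence with $R/I$ produces
\[
\Tor_1^R(C, R/I) \to A/I \to B/I \to C/I \to 0,
\]
and $\Tor_1^R(C, R/I) = 0$ because projective modules are flat. Either argument completes the proof.

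There is no substantive obstacle here; the result is entirely formal homological algebra. The only point worth noting is that the hypothesis used is really the projectivity (or even just flatness) of the quotient term $C$, so the assumptions on $A$ and $B$ are not needed for the conclusion; we will verify projectivity of $C$ in each concrete application when invoking this lemma in the subsequent section.
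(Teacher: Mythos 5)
Your proof is correct. The paper itself offers no argument for this lemma (it is stated as an ``elementary lemma'' with no proof environment), so there is nothing to compare against; either of your two routes --- splitting the sequence off the projective quotient $C$ and applying $-\otimes_R R/I$, or killing $\Tor_1^R(C,R/I)$ by flatness of $C$ --- is a complete and standard justification. Your closing observation is also accurate and worth recording: the hypothesis actually used is only projectivity (indeed flatness) of $C$, not of $A$ and $B$, which is the relevant point when the lemma is invoked in the computations of the following section, where the quotient terms are modules such as $R(\IZ/2\IZ)$ and $\IZ^2$ over $R(\IZc)$.
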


\subsection*{Bredon cohomology  of  Crystallographic   groups of  arbitrary  dimension   with  a  given point  group}

\begin{remark}\label{remarkcrystallographic}
Let  $\Gamma$  be a  group  extension 
$$
1\to \mathbb{Z}^n\to \Gamma \to K \to 1
$$ 
 given  by  the  conjugation  action  of a  representation $\rho: K\to Gl_n(\mathbb{Z})$   of a  finite  group $K$.  
 Then, 

\begin{itemize}
\item  The  space $\mathbb{R}^{n} $ with the  induced  action  is a  model  for  $\eub{\Gamma}$. This  is a consequence  of  Proposition  1.12, page 30 in  \cite{connolykosniewsky}.  
\item Let  $1\to \mathbb{Z}^n\to \Gamma \to K \to 1$ be  a  group  extension  coming  from  a  representation  of  a  finite  group $\rho: K\to Gl_n(\IZ)$. Assume  that $\IZ^n$  with  the  action  given  by  $\rho$  has  a  $K$-invariant  decomposition $\IZ^n[\rho]= A\oplus B$. Denote  by  $G$  the  semidirect  product  $A\rtimes  K$  and  by  $H$  the  semidirect  product  $B\rtimes  K$. Then, the group $\Gamma$  is  isomorphic  to  the  pullback  $G\times_K H$, as it  can  be readily  seen  from  the  following diagram,  
$$\xymatrix{\Gamma \ar[r] \ar[d]& G \ar[d] \\ H \ar[r]& K }   $$

Here,   the  maps $\Gamma\to H$  and $\Gamma \to G$  are  determined  by  the  projections  onto  the  invariant  $K$-submodules $\IZ^n[\rho]\to  A$ and  $\IZ^ n[\rho]\to B$,  which  in  turn induce    group  homomorphisms $\Gamma= \IZ^n \rtimes  K \to G=A\rtimes K$, $\Gamma= \IZ^n \rtimes  K \to G=B\rtimes K$  giving  the relevant  group  homomorphisms  out  of  $\Gamma$.

 \end{itemize}
 
  The  spectral sequence constructed  in Theorem \ref{eilenbergmoore}  suggests a  method  to   compute  the Bredon  cohomology  groups $H_{\Gamma}^*(\eub{\Gamma}, M)$: 
 \begin{itemize}
\item Decompose  the representation  $\rho$  as  direct  sum $\rho= \oplus  n_i \rho_i$  of indecomposable  representations $\rho_i:K\to Gl_{n_i}(\IZ)$ .
\item Consider  the group  extensions 
$$1\to \mathbb{Z}^{n_i} \to \Gamma_i  \to  K\to  1   $$
\item Compute  the  (potentially easier)  Bredon  cohomology  groups  $H_{\Gamma_i}^*(\eub{\Gamma_i}, M)$
\item  Feed  the  spectral  sequence \ref{eilenbergmoore}  with the cohomology  groups.
\item Establish the  relevant  differential graded  module  structures and  obtain information about $H_{\Gamma}^*(\eub{\Gamma}, M)$.

 \end{itemize}
 \end{remark}
For   finite  groups $K$ for  which  any  prime  $p$, the  $p$-Sylow  subgroup  is  of  order less  than $p^3$, there  is  a  finite  number   of  irreducible  such representations $\rho_i$ \cite{hellerreiner}.

We  will  specialize in  crystallographic  groups  with point  group $\mathbb{Z}/4\IZ$ for  an  specific  example and carry out  this  program obtainting  complete  integral  information  in the  next  section.


\section{Computations  for   the Vafa-Witten group $\Gamma$.   }
Consider the action of $\IZ/4\IZ$ on $\IZ^{\oplus6}$ induced from the 
  action of $\IZ/4\IZ$ on $\IC^3$, given by
  $$
  k(z_1,z_2,z_3)=(-z_1,iz_2,iz_3).
  $$
  The associated semidirect product 
  \begin{align*}
  1\to \mathbb{Z}^6 \to \Gamma \to \IZ/4\IZ\to 1
  \end{align*}
  will  be  called  the  Vafa-Witten  Group  and splits as a multiple pullback
  \begin{equation}\label{multiplepullback}
  (\IZ\rtimes\IZ/4\IZ)\times_{\IZ/4\IZ}(\IZ\rtimes\IZ/4\IZ)\times_{\IZ/4\IZ}(\IZ^2\rtimes\IZ/4\IZ)\times_{\IZ/4\IZ}(\IZ^2\rtimes\IZ/4\IZ).
\end{equation}
The first two semidirect products are taken with respect to  the  conjugation action of $\mathbb{Z}/4$  on  $\mathbb{Z}^2$  given  by  scalar  multiplication with  $-1$,  and the two  last  ones  are  given  by  the conjugation action  on $\mathbb{Z}^2$ given by complex  multiplication by   $i$.

First we  will  apply  the  Spectral  sequence  constructed  in previous  sections  to  compute  the  equivariant  $K$-Theory  and  $K$-homology  of  the  classifying  space  $\eub{\Gamma}$.  Using  the  universal  coefficient  Theorem  1.13  in  \cite{BAVE2013},   this  gives  the   equivariant $K$-homology  groups  relevant  to  the  Baum-Connes  conjecture.

Finally,  we  classify the  virtually  cyclic  subgroups  appearing  in $\Gamma$, and  using  results  on the  algebraic  $K$-theory  in  degrees  lower  than  $-1 $,  we  wil  conclude  the  vanishing  result.
  
\subsection*{Topological  $K$-theory and  $K$-homology }
 We  begin with a  recollection  of the   building  blocks  of   the  action,  as  well as  their  Bredon Cohomology  groups.

\subsubsection*{$\IR$ with the action of $\IZ\rtimes\IZ/4\IZ$}

Let $X=\IR$ with the action of the group $G=\IZ\rtimes\IZ/4\IZ$ where the semidirect product is taken with respect to the  action  given  by  multiplication  with  $-1$,  $-1:\IZ/4\IZ\rightarrow \IZ$. Notice that $X$ is a model for $\underbar{E}G$.

The space $X$ has a $G$-CW-complex structure with two $0$-cell  orbits  $\{0,1/2\}$ both with isotropy groups isomorphic to $\IZ/4\IZ$ and one $1$-cell orbit  $[0,1/2]$ with isotropy group $\IZ/2\IZ$. The Bredon cellular complex takes the form
$$0\rightarrow R(\IZc)\oplus R(\IZc)\rightarrow R(\IZ/2\IZ)\rightarrow 0.
$$

Where  $R(\IZ/4\IZ)$  is  the  representation  ring  of  the  finite  cyclic  group of  order  $4$,  $(\IZ/4\IZ)$. This is  a  polynomial  algebra  isomorphic to $\IZ[\eta]/ \eta^4-1$,  where  $\eta$  is  the  generator.

The Bredon cohomology groups of $X$ with respect to the family of finite subgroups $\mathcal{FIN}(\IZ\rtimes\IZ/4\IZ)$ (we denote by $\calf$) with coefficients in representations  can be easily calculated from it and they are concentrated in degree 0, with $H^0_\calf(X,\calr)=\IZ^{\oplus6}$.
From the calculations of the cohomology groups of $X$ we know that we have an exact sequence of projective $R(\IZc)$-modules
$$
0\rightarrow H^0_\calf(X,\calr)\rightarrow (R(\IZc))^2\rightarrow R(\IZ/2\IZ)\rightarrow 0.
$$
As all $R(\IZc)$-modules in the  above exact  sequence are projective,  hence  we can apply Lemma \ref{restriction of coefficients} with the ideal $I=\langle \eta^2-1\rangle$ contained in $R(\IZc)=\IZ[\eta]/\langle\eta^4-1\rangle$, obtaining the exact sequence

\begin{align*} 
0\rightarrow H^0_\calf(X,\calr)/I\cdot &H^0_\calf(X,\calr)\rightarrow \\
&(R(\IZc))^2/I\cdot(R(\IZc)^2)\rightarrow R(\IZ/2\IZ)\rightarrow 0.
\end{align*}

From the last exact sequence we obtain 
$$
H^0_\calf(X,\calr)/I\cdot H^0_\calf(X,\calr)\cong\IZ^{\oplus2}.
$$
Finally, if we denote by $J$ the ideal $\langle \eta-1\rangle$, as $X/G$ is path-connected,  
$$
H^0_\calf(X,\calr)/J\cdot H^0_\calf(X,\calr)\cong\IZ.$$

\subsubsection*{$\IR^2$ with the action of $\IZ^2\rtimes\IZ/4\IZ$}

Let $Y=\IR^2$ with the action of the group $H=\IZ^2\rtimes\IZ/4\IZ$ where the semidirect product is taken with respect to the action  $i:\IZ/4\IZ\rightarrow Gl_2(\IZ)$ given  by  multiplication  by  $i$. Note that $Y$ is a model for $\underbar{E}H$.
$Y$ is an $H$-CW-complex with three $0$-cell orbits  $(0,0)$, $(1/2,0)$ and $(1/2,1/2)$, two $1$cell orbits  $a_0$ and $a_1$ and one $2$-cell orbit  $T$.

The Bredon cellular complex takes the form
$$
0\rightarrow R(\IZc)\oplus R(\IZc)\oplus R(\IZ/2\IZ)\rightarrow \IZ\oplus\IZ\rightarrow\IZ\rightarrow 0.$$
 The Bredon cohomology groups of $Y$ respect to the family of finite subgroups $\mathcal{FIN}(\IZ\rtimes\IZ/4\IZ)$ (we denote by $\calf'$) with coefficients in representations are concentrated in degree 0 and 2, with $H^0_{\calf'}(Y,\calr)=\IZ^{\oplus8}$ and $H^2_{\calf'}(Y,\calr)=\IZ$. Note that it is compatible with the results in \cite{luck2005} in  section  2, since  the  group $\IZ^2\rtimes_i \IZ/4\IZ$   satisfies  hypotheses  M  and NM (Lemma  2.2 in page 1648).

From the calculations of the cohomology groups of $Y$ we know that there is an exact sequence of projective $R(\IZc)$-modules
$$0\rightarrow H^0_{\calf'}(Y,\calr)\rightarrow (R(\IZc))^2\oplus R(\IZ/2\IZ)\rightarrow \IZ^2\rightarrow 0.$$
Taking the quotient by $I$ we obtain

\begin{align*}
0\rightarrow H^0_{\calf'}(Y,\calr)/I\cdot &H^0_{\calf'}(Y,\calr)\rightarrow \\
&(R(\IZc))^2/I\cdot(R(\IZc)^2)\oplus R(\IZ/2\IZ)\rightarrow \IZ^2\rightarrow 0.
\end{align*}

From the last exact sequence we obtain  
$$
H^0_{\calf'}(Y,\calr)/I\cdot H^0_{\calf'}(Y,\calr)\cong\IZ^{\oplus 2}.
$$
As $Y/H$ is path-connected  
$$
H^0_{\calf'}(Y,\calr)/J\cdot H^0_{\calf'}(Y,\calr)\cong\IZ.
$$
Lastly, since $H^2_{\calf'}(Y,\calr)\cong\IZ$ we obtain 
$$
H^2_{\calf'}(Y,\calr)/I\cdot H^2_{\calf'}(Y,\calr)\cong H^2_{\calf'}(Y,\calr)/J\cdot H^2_{\calf'}(Y,\calr)\cong\IZ.
$$

\subsubsection*{$\IR^6$ with the action of $\IZ^6\rtimes\IZc$}
We proceed to calculate the Bredon cohomology groups of the space $X^2 =X\times X$ with the action of the group $G\times_{\IZ/4\IZ}G$. Theorem \ref{kunneth} gives us an isomorphism
$$
H^0_{\calf\times_{\IZ/4\IZ}\calf}(X^2,\calr)\cong H^0_{\calf}(X,\calr)\otimes_{R(\IZ/4\IZ)}H^0_{\calf}(X,\calr).
$$
From the calculations of the cohomology groups of $X$,  we know that we have an exact sequence of projective $R(\IZc)$-modules
\begin{multline}\label{sequencerank12}
0\rightarrow H^0_\calf(X,\calr)\rightarrow (R(\IZc))^2\rightarrow R(\IZ/2\IZ)\rightarrow 0,
\end{multline}
by tensoring this sequence with  $H^0_\calf(X,\calr)$ we obtain the exact sequence

    \begin{multline}\label{es1}
   H^0_\calf(X,\calr)\otimes_{R(\IZ/4\IZ)}H^0_{\calf}(X,\calr)\rightarrow (H^0_\calf(X,\calr))^2\rightarrow\\
    H^0_\calf(X,\calr)/I
   \cdot H^0_\calf(X,\calr)\rightarrow 0.
   \end{multline}
 
The rank of $(H^0_\calf(X,\calr))^2$ is 12, as  can  be seen  from  counting  ranks  in  sequence \ref{sequencerank12}

 and the rank of $H^0_\calf(X,\calr)/I\cdot H^0_\calf(X,\calr)$ is 2 then
$$
H^0_\calf(X,\calr)\otimes_{R(\IZ/4\IZ)}H^0_{\calf}(X,\calr)\cong\IZ^{\oplus10}.
$$
For simplicity we denote this group by $A$.
Dividing the sequence \ref{es1} by the ideal $I$, respectively, by $J$ we obtain
$A/I\cdot A\cong\IZ^{\oplus2}$ and  $A/J\cdot A\cong\IZ$.

We continue calculating
$$
H^0_\calf(X,\calr)\otimes_{R(\IZ/4\IZ)}H^0_{\calf}(X,\calr)\otimes_{R(\IZ/4\IZ)}H^0_{\calf'}(Y,\calr).
$$
From the calculations of the cohomology groups of $Y$ we know that there is  an exact sequence of projective $R(\IZc)$-modules
\begin{equation}
\label{es2}0\rightarrow H^0_{\calf'}(Y,\calr)\rightarrow (R(\IZc))^2\oplus R(\IZ/2\IZ)\rightarrow \IZ^2\rightarrow 0.
\end{equation}
by tensoring  this sequence with  $A= H^0_\calf(X,\calr)\otimes_{R(\IZ/4\IZ)}H^0_{\calf}(X,\calr)   $,  we obtain
\begin{equation}
\label{es3} A \otimes_{R(\IZ/4\IZ)}H^0_{\calf'}(Y,\calr)\rightarrow A^2\oplus (A/I\cdot A)\rightarrow (A/J\cdot A)^2\rightarrow 0,
\end{equation}
then
 $$
 H^0_\calf(X,\calr)\otimes_{R(\IZ/4\IZ)}H^0_{\calf}(X,\calr)\otimes_{R(\IZ/4\IZ)}H^0_{\calf'}(Y,\calr)\cong\IZ^{\oplus20}.
 $$
If we divide \ref{es3} by $I$, respectively by $J$, we obtain 
$$
A\otimes_{R(\IZc)} H^0_{\calf'}(Y,\calr)/I\cdot (A\otimes_{R(\IZc)}  H^0_{\calf'}(Y,\calr))\cong\IZ^{\oplus6}$$ 
and 
$$
A\otimes_{R(\IZc)} H^0_{\calf'}(Y,\calr)/J\cdot (A\otimes_{R(\IZc)}  H^0_{\calf'}(Y,\calr))\cong\IZ.
$$
On the other hand 
$$
H^0_\calf(X,\calr)\otimes_{R(\IZ/4\IZ)}H^0_{\calf}(X,\calr)\otimes_{R(\IZ/4\IZ)}H^2_{\calf'}(Y,\calr)\cong\
A/I\cdot A\cong\IZ.$$
We continue calculating
$$
H^0_\calf(X,\calr)\otimes_{R(\IZ/4\IZ)}H^0_{\calf}(X,\calr)\otimes_{R(\IZ/4\IZ)}H^0_{\calf'}(Y,\calr)\otimes_{R(\IZ/4\IZ)}H^0_{\calf'}(Y,\calr).
$$
Tensoring the sequence \ref{es2} with $A\otimes_{R(\IZc)}H^0_{\calf'}(Y,\calr)$ we obtain 
$$
H^0_\calf(X,\calr)\otimes_{R(\IZ/4\IZ)}H^0_{\calf}(X,\calr)\otimes_{R(\IZ/4\IZ)}H^0_{\calf'}(Y,\calr)\otimes_{R(\IZ/4\IZ)}H^0_{\calf'}(Y,\calr)\cong\IZ^{\oplus44}.
$$
Now,  Theorem \ref{kunneth} implies 
\begin{equation}\label{H0}
H^0_{\calfin(\Gamma)}(\IR^6,\calr)\cong\IZ^{\oplus44}
\end{equation}
On the other hand, Theorem \ref{kunneth} gives us an isomorphism
\begin{align*}
&H^2_{\calfin(\Gamma)}(\IR^6,\calr)\cong\\
&(H^0_\calf(X,\calr)\otimes_{R(\IZ/4\IZ)}H^0_{\calf}(X,\calr)\otimes_{R(\IZ/4\IZ)}H^0_{\calf'}(Y,\calr)\otimes_{R(\IZ/4\IZ)}H^2_{\calf'}(Y,\calr))^2.
\end{align*}

But, 
\begin{align*}
&H^0_\calf(X,\calr)\otimes_{R(\IZ/4\IZ)}H^0_{\calf}(X,\calr)\otimes_{R(\IZ/4\IZ)}H^0_{\calf'}(Y,\calr)\otimes_{R(\IZ/4\IZ)}H^2_{\calf'}(Y,\calr)\cong\\
&\cong A/J\cdot A\cong\IZ.
\end{align*}

Then,
\begin{equation}\label{H2}
H^2_{\calfin(\Gamma)}(\IR^6,\calr)\cong\IZ\oplus\IZ.
\end{equation}

Finally, Theorem \ref{kunneth} gives us an isomorphism

\begin{align}  \label{H4}
&H^4_{\calfin(\Gamma)}(\IR^6,\calr)\cong 
&H^0_\calf(X,\calr)\otimes_{R(\IZ/4\IZ)}H^0_{\calf}(X,\calr)\otimes_{R(\IZ/4\IZ)}H^2_{\calf'}(Y,\calr)\otimes_{R(\IZ/4\IZ)}H^2_{\calf'}(Y,\calr).
\end{align}

But
\begin{align*}
H^0_\calf(X,\calr)\otimes_{R(\IZ/4\IZ)}&H^0_{\calf}(X,\calr)\otimes_{R(\IZ/4\IZ)}H^2_{\calf'}(Y,\calr)\\
&\otimes_{R(\IZ/4\IZ)}H^2_{\calf'}(Y,\calr)\cong A/J\cdot A\cong\IZ.
 \end{align*}

We summarize these results in
  \begin{theorem}
  Let  $\Gamma$  be  the  Vafa-Witten  Group  $\IZ^6\rtimes\IZc$ \ref{vafawittengroup} acting on $\IR^6$ as is described in Section \ref{Introduction}. The  Bredon  cohomology groups  of $\IR^6$  are  given as  follows: 
    \begin{itemize}
    \item $H^0_{\calfin(\Gamma)}(\IR^6,\calr)\cong\IZ^{\oplus44},$
    \item $H^2_{\calfin(\Gamma)}(\IR^6,\calr)\cong\IZ\oplus\IZ,$
    \item $H^4_{\calfin(\Gamma)}(\IR^6,\calr)\cong\IZ,$ and
    \item $H^k_{\calfin(\Gamma)}(\IR^6,\calr)=0\text{, for }k\neq0,2,4.$
    \end{itemize}
  \end{theorem}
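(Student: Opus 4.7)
The plan is to exploit the pullback decomposition (\ref{multiplepullback}) and iterate the K\"unneth-type isomorphism of Theorem~\ref{kunneth}. Writing $X=\IR$ (with $G=\IZ\rtimes\IZ/4\IZ$-action via $-1$) and $Y=\IR^2$ (with $H=\IZ^2\rtimes\IZ/4\IZ$-action via $i$), the space $\IR^6$ becomes $X\times X\times Y\times Y$ with the natural action of $G\times_{\IZ/4\IZ}G\times_{\IZ/4\IZ}H\times_{\IZ/4\IZ}H\cong\Gamma$. The first task is to verify that the representation ring functor $\calr$ satisfies Condition~\ref{conditionP} on the finite subgroups appearing in this pullback. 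Since every finite subgroup of $\Gamma$ is (by Lemma~\ref{lemmafamilyfiniteisotropies}) obtained as a fibered product of finite subgroups of the factors, and $R(\IZ/4\IZ)$ is semisimple with $R(P)$ a free (hence projective) $R(\pi_i(P))$-module for any finite $P$, the projectivity hypothesis holds at every step. This legitimates an iterated application of Theorem~\ref{kunneth}.

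Next, I would use the four building-block computations already obtained in the excerpt: $H^0_\calf(X,\calr)\cong\IZ^{\oplus 6}$, $H^0_{\calf'}(Y,\calr)\cong\IZ^{\oplus 8}$, $H^2_{\calf'}(Y,\calr)\cong\IZ$, all other groups vanishing, together with the controlled reductions modulo the ideals $I=\langle\eta^2-1\rangle$ and $J=\langle\eta-1\rangle$ of $R(\IZ/4\IZ)=\IZ[\eta]/(\eta^4-1)$. Applying Theorem~\ref{kunneth} to $X\times X$ yields an abutment $H^0_{\calf\times_{\IZ/4\IZ}\calf}(X^2,\calr)\cong H^0_\calf(X,\calr)\otimes_{R(\IZ/4\IZ)} H^0_\calf(X,\calr)$ concentrated in degree $0$. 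Tensoring the short exact sequence (\ref{sequencerank12}) of projective $R(\IZ/4\IZ)$-modules with $H^0_\calf(X,\calr)$ and using Lemma~\ref{restriction of coefficients} gives the rank counts, reducing successive tensor products to a rank-chase modulo $I$ and $J$ and delivering the intermediate values $\IZ^{\oplus 10}$, $\IZ^{\oplus 20}$, $\IZ^{\oplus 44}$ that have already been assembled in the excerpt.

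To obtain the higher cohomology groups, I would organize the K\"unneth computation by bidegree: the only non-zero contributions come from choosing, for each of the two $Y$-factors, either the degree-$0$ part $H^0_{\calf'}(Y,\calr)$ or the degree-$2$ part $H^2_{\calf'}(Y,\calr)$. The degree-$0$ summand of $H^*_{\calfin(\Gamma)}(\IR^6,\calr)$ is then the fourfold tensor product $H^0_\calf\otimes H^0_\calf\otimes H^0_{\calf'}\otimes H^0_{\calf'}\cong\IZ^{\oplus 44}$, which gives (\ref{H0}). The degree-$2$ part is the direct sum of two copies (one for each $Y$-factor promoted to degree $2$) of $H^0_\calf\otimes H^0_\calf\otimes H^0_{\calf'}\otimes H^2_{\calf'}\cong A/J\cdot A\cong \IZ$, giving (\ref{H2}). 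The degree-$4$ part corresponds to promoting both $Y$-factors, yielding (\ref{H4}) equal to $A/J\cdot A\cong\IZ$. All other total degrees vanish, because the $X$-factors contribute only in degree $0$ and the $Y$-factors only in degrees $0$ and $2$.

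The main obstacle is the careful bookkeeping of the $R(\IZ/4\IZ)$-module structures: one must keep track at each step whether the relevant tensor factor reduces to $\IZ^{\oplus 2}$ under $I$ or to $\IZ$ under $J$, in order for the short exact sequences (\ref{es2}), (\ref{es3}) to give the right ranks. Once these module-theoretic reductions are in hand, the collapse of the Eilenberg--Moore spectral sequence guaranteed by the projectivity hypothesis turns every K\"unneth step into a plain tensor product, and the four displayed isomorphisms follow by assembling the pieces.
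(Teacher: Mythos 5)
Your proposal follows essentially the same route as the paper: the theorem is proved there by assembling the preceding computations (the building blocks $H^0_\calf(X,\calr)\cong\IZ^{\oplus 6}$, $H^0_{\calf'}(Y,\calr)\cong\IZ^{\oplus 8}$, $H^2_{\calf'}(Y,\calr)\cong\IZ$, the rank chase through the exact sequences modulo the ideals $I$ and $J$, and the iterated application of Theorem \ref{kunneth} organized by bidegree), which is exactly what you do. The only cosmetic difference is that you make the verification of Condition \ref{conditionP} and the bidegree bookkeeping explicit, whereas the paper leaves them implicit in the displayed computations \ref{H0}, \ref{H2}, \ref{H4}.
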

\begin{proof}
Recall  the  multiple  pullback  structure 
\begin{equation*}\label{multiplepullback}
  (\IZ\rtimes\IZ/4\IZ)\times_{\IZ/4\IZ}(\IZ\rtimes\IZ/4\IZ)\times_{\IZ/4\IZ}(\IZ^2\rtimes\IZ/4\IZ)\times_{\IZ/4\IZ}(\IZ^2\rtimes\IZ/4\IZ).
\end{equation*}
The  result  can  be   obtained  from  \ref{H0}, \ref{H2}, \ref{H4}.  
\end{proof}
As the Bredon cohomology groups are concentrated in even degrees the Atiyah-Hirzebruch spectral sequence collapses at the  $E_2$  term, and we get

\begin{theorem}[Equivariant K  theory  of  $\eub{\Gamma}$]\label{cristalograficos}
  Let $\Gamma$ denote the group $\IZ^6\rtimes\IZc$ acting on  the  model for  $\eub{\Gamma}$given by  $\IR^6$ as it is described in Section \ref{Introduction}. The equivariant $K$-theory  groups satisfy
    \begin{itemize}
    \item $K^0_{\Gamma}(\underbar{E}\Gamma)\cong\IZ^{\oplus47}$ and
    \item $K^1_{\Gamma}(\underbar{E}\Gamma)=0$
    \end{itemize}
  \end{theorem}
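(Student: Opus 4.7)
The plan is to feed the Bredon cohomology computation just obtained into the Atiyah-Hirzebruch spectral sequence for equivariant $K$-theory and check that it degenerates for dimensional reasons. Concretely, for the proper $\Gamma$-CW complex $\eub{\Gamma}=\IR^{6}$ the relevant spectral sequence has
\begin{equation*}
E_2^{p,q}=H^{p}_{\calfin(\Gamma)}\bigl(\IR^{6},K^{q}_{\Gamma}(\Gamma/?)\bigr)\Longrightarrow K^{p+q}_{\Gamma}(\eub{\Gamma}),
\end{equation*}
where the coefficient system $K^{q}_{\Gamma}(\Gamma/?)$ sends $\Gamma/H$ (with $H$ finite) to $R(H)$ when $q$ is even and to $0$ when $q$ is odd, by Bott periodicity for the representation ring.

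First I would invoke the previous theorem to identify the $E_2$-page with the coefficient functor $\calr$. Since $H^{p}_{\calfin(\Gamma)}(\IR^{6},\calr)$ was shown to vanish outside $p\in\{0,2,4\}$ and since the coefficient system is zero for $q$ odd, the whole $E_2$-page is concentrated in bidegrees $(p,q)$ with both $p$ and $q$ even. In particular all differentials $d_r$, $r\geq 2$, have odd total degree shift in the sense that they change $p+q$ by $\pm 1$; combined with the checkerboard vanishing this forces every differential to land in a trivial group. The spectral sequence therefore collapses at $E_2=E_\infty$.

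From the $E_\infty$-term I would then read off the associated graded of $K^{*}_{\Gamma}(\eub{\Gamma})$. For $K^{0}$ the contributing terms are $E_\infty^{0,0}\cong\IZ^{\oplus44}$, $E_\infty^{2,-2}\cong\IZ^{\oplus2}$, and $E_\infty^{4,-4}\cong\IZ$, summing to rank $47$; for $K^{1}$ all possible contributions are in odd total degree and hence vanish. Because each $E_\infty^{p,q}$ is a finitely generated free abelian group, any filtration extensions split, giving $K^{0}_{\Gamma}(\eub{\Gamma})\cong\IZ^{\oplus47}$ and $K^{1}_{\Gamma}(\eub{\Gamma})=0$ on the nose.

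The only potentially nonroutine point is the justification that no differential can hit or leave the columns $p=0,2,4$; this is immediate from the checkerboard pattern, so there is no real obstacle. I do not expect any serious difficulty: once the even-degree concentration of Bredon cohomology is in hand, the rest of the argument is the standard collapse-plus-free-extension argument for Atiyah-Hirzebruch.
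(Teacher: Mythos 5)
Your proposal is correct and follows exactly the paper's argument: the paper likewise observes that since the Bredon cohomology groups are concentrated in even degrees, the Atiyah--Hirzebruch spectral sequence collapses at $E_2$, and then sums the free abelian contributions $\IZ^{\oplus 44}\oplus\IZ^{\oplus 2}\oplus\IZ\cong\IZ^{\oplus 47}$ for $K^0$ with nothing in odd total degree for $K^1$. You simply spell out the checkerboard vanishing and the splitting of the filtration extensions, which the paper leaves implicit.
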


Recall the  universal  coefficient  theorem  for Bredon  cohomology  with  coefficients  in  complex  representations, Theorem 1.13 in \cite{BAVE2013}, which  we  quote  here  for  the  sake  of  completeness: 
\begin{theoremn}[Universal Coefficient Theorem for Bredon Cohomology]
Let  $X$  be  a  proper,  finite    $G$-CW complex.  Let  $M^ ?$ and  $M_?$  be the  complex representation  ring  with contravariant,  respectively  covariant   functoriality.     Then, there  exists   a short  exact  sequence  of  abelian  groups involving  Bredon  Homology  with  coefficients  in $M^?$  and  Bredon  homology  with  coefficients  in $M_?$

 $$ 0\to {Ext}_{\mathbb{Z  }}  (H_{n-1}^{G} (X, M_?), \mathbb{Z})\to  H^ {n}_{G}(X,  M^?) \to {Hom}_{\mathbb{Z  }}  (H_{n}^{G } (X, M_?), \mathbb{Z}) \to  0 $$ 

\end{theoremn}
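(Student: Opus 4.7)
First, I would record the shape of the Bredon chain and cochain complexes for a proper, finite $G$-CW complex $X$, with the aim of reducing the statement to the classical Universal Coefficient Theorem over $\IZ$ by exhibiting the Bredon cochain complex with coefficients in $M^?$ as the $\IZ$-linear dual of the Bredon chain complex with coefficients in $M_?$. The Bredon chain complex $C_*^G(X)$ is a finitely generated free complex of contravariant $\IZ$-modules on $\Or(G,\calfin)$, with $C_n^G(X) \cong \bigoplus_\lambda \IZ[\mor_{\Or(G)}(-, G/H_\lambda)]$ indexed by the $n$-cell orbits. Applying $\Hom_{\Or(G,\calfin)}(-, M^?)$ produces $C^n_G(X, M^?) \cong \prod_\lambda M^?(G/H_\lambda) = \prod_\lambda R(H_\lambda)$ by the Yoneda lemma, while the coend with $M_?$ produces $C_n^G(X, M_?) \cong \bigoplus_\lambda R(H_\lambda)$. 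Each $R(H_\lambda)$ is a finitely generated free abelian group with basis the irreducible complex characters.

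Next, I would construct a natural duality between the coefficient systems $M^?$ and $M_?$. For each finite group $H$ the Schur-orthogonality pairing
\begin{equation*}
\langle \cdot,\cdot \rangle_H \colon R(H) \times R(H) \to \IZ, \quad (V,W) \mapsto \dim_\IC \Hom_{\IC H}(V,W),
\end{equation*}
is perfect, hence induces an isomorphism $M^?(G/H) \cong \Hom_\IZ(M_?(G/H), \IZ)$. A $G$-map $G/H \to G/K$ corresponds to some $g \in G$ with $g^{-1}Hg \subseteq K$, inducing restriction (composed with conjugation) on $M^?$ and the corresponding induction on $M_?$; Frobenius reciprocity is exactly the statement that these operations are mutually adjoint under the character pairings. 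Consequently the isomorphism is natural over the orbit category and, combined with the previous step, upgrades to a chain-level identification
\begin{equation*}
C^*_G(X, M^?) \;\cong\; \Hom_\IZ(C_*^G(X, M_?), \IZ).
\end{equation*}

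Finally, since $C_*^G(X, M_?)$ is a bounded complex of finitely generated free abelian groups (using that $X$ is finite), the classical UCT over $\IZ$ furnishes the natural short exact sequence
\begin{equation*}
0 \to \Ext_\IZ(H_{n-1}(C_*^G(X, M_?)), \IZ) \to H^n(\Hom_\IZ(C_*^G(X, M_?), \IZ)) \to \Hom_\IZ(H_n(C_*^G(X, M_?)), \IZ) \to 0,
\end{equation*}
which under the identifications of the previous paragraph becomes the asserted sequence. The main obstacle lies in the naturality verification of the middle step: one must track conjugations carefully and confirm that Frobenius reciprocity descends to a naturality statement on the entire orbit category $\Or(G,\calfin)$, not merely for inclusions of subgroups. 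Once that bookkeeping is complete, the whole construction is functorial in $X$ and the resulting sequence splits unnaturally in the usual way.
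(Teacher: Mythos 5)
Your proposal is correct. Note, however, that this paper does not prove the statement at all: it is quoted verbatim as Theorem 1.13 of \cite{BAVE2013} ``for the sake of completeness,'' so there is no in-paper argument to compare against. Your route --- identifying $C^*_G(X,M^?)$ with $\Hom_\IZ(C_*^G(X,M_?),\IZ)$ via the perfect Schur-orthogonality pairing on each $R(H)$, using Frobenius reciprocity to make restriction and induction mutually dual over the orbit category, and then invoking the classical universal coefficient theorem for a bounded complex of finitely generated free abelian groups --- is the standard argument and is, in substance, the one carried out in the cited reference; the only point deserving the care you already flag is the naturality check for general orbit-category morphisms (conjugation composed with inclusion), which Frobenius reciprocity in its conjugation-equivariant form does handle.
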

 
  We  conclude  that  the Bredon homology groups  above are  isomorphic  on  each  degree  to  the equivariant Bredon cohomology  groups.

Now  consider  the  Atiyah  Hirzebruch  spectral  sequence  for  computing  Equivariant  $K$-homology groups. The  $E_2$  term consists  of  the  Bredon  cohomology  groups  $H_{*}^{G} (X, M_?)= H^{*}_{G} (X, M^?)$,  which  are  concentrated  on  even  degrees. Since  all  differentials  in the (homological!)  Atiyah-Hirzebruch  spectral  sequence   are  zero, the  edge  homomorphism identifies  the  zeroth  equivariant $K$-homology   group  with  the  sum $\bigoplus_{N=0,1,2} H_{2n}^{G} (X, M_?)$  and  the  first equivariant  $K$-homology  group  with  $0$.

 On the  other  hand,  the  Baum-Connes  assembly  map $K_* ^{\Gamma}(\underbar{E}\Gamma)\to K_*(C_r^*(\Gamma))$ is  an  isomorphism due  to results  of  Higson-Kasparov \cite{higson-kasparov}. Putting  all this  together,  we  obtain the  following  computation  of  the  reduced  $C^*$-algebra  of  the  group $\Gamma$. 

\begin{corollary}[Equivariant K-Homology of  $\eub{\Gamma}$]
  Let $\Gamma$ denote the group $\IZ^6\rtimes\IZc$ acting on  the  model for  $\eub{\Gamma}$given by  $\IR^6$ as is described in Section \ref{Introduction}.
    \begin{itemize}
    \item $K_0 ^{\Gamma}(\underbar{E}\Gamma)\cong  K_0^*(C_r^*(\Gamma))\cong\IZ^{\oplus47}$ and
    \item $K_1^{\Gamma}(\underbar{E}\Gamma) \cong K_1^*(C_r^*(\Gamma))=0$
    \end{itemize}
  \end{corollary}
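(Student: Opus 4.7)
The plan is to combine three ingredients that have already been prepared in the excerpt: the Universal Coefficient Theorem for Bredon cohomology stated just above the corollary, the equivariant Atiyah--Hirzebruch spectral sequence abutting to $K_*^\Gamma(\eub{\Gamma})$, and the Baum--Connes assembly isomorphism for $\Gamma$. The cohomological input is the preceding theorem, which gives $H^0_{\calfin(\Gamma)}(\IR^6,\calr) \cong \IZ^{44}$, $H^2 \cong \IZ^{2}$, $H^4 \cong \IZ$, and zero in all other degrees.

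First I would convert this into a statement about equivariant Bredon homology with coefficients in the covariant functor $M_?$. Each Bredon cohomology group just listed is finitely generated and free abelian. Plugging this into the UCT sequence
$$0 \to \Ext_{\IZ}(H^\Gamma_{n-1}(\IR^6, M_?), \IZ) \to H^{n}_\Gamma(\IR^6, M^?) \to \Hom_{\IZ}(H^\Gamma_{n}(\IR^6, M_?), \IZ) \to 0$$
and running a downward induction on $n$ starting from a degree where the groups vanish, freeness of $H^n_\Gamma$ forces the $\Ext$-term (hence the torsion part) of $H^\Gamma_{n-1}$ to vanish and yields $H^\Gamma_n \cong H^n_\Gamma$ as free abelian groups of matching ranks. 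Thus Bredon homology is also concentrated in even degrees, with ranks $44, 2, 1$.

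Next I would feed these homology groups into the equivariant Atiyah--Hirzebruch spectral sequence $E^2_{p,q} = H^\Gamma_p(\eub{\Gamma}; K_q) \Rightarrow K^\Gamma_{p+q}(\eub{\Gamma})$, whose coefficient system is the complex representation ring in even $q$ and vanishes in odd $q$. Since the $E^2$-page is concentrated in even total degree, every differential is forced to be zero, so the sequence collapses at $E^2$. No extension problems arise because each filtration quotient is free abelian. Reading off the associated graded yields $K_0^\Gamma(\eub{\Gamma}) \cong \IZ^{44+2+1} = \IZ^{47}$ and $K_1^\Gamma(\eub{\Gamma}) = 0$.

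Finally, since $\Gamma = \IZ^6 \rtimes \IZc$ is virtually abelian, it is a-T-menable, so by Higson--Kasparov \cite{higson-kasparov} the Baum--Connes assembly map $K_*^\Gamma(\eub{\Gamma}) \to K_*(C^*_r(\Gamma))$ is an isomorphism. Composing with the previous identification gives the stated values for $K_*(C^*_r(\Gamma))$. The only substantive step is the first: the Atiyah--Hirzebruch collapse and the Baum--Connes invocation are then automatic given the ranks, so the main care is bookkeeping freeness through the UCT to guarantee the homological groups inherit the concentration in even degrees.
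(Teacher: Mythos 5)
Your proposal is correct and follows essentially the same route as the paper: the Universal Coefficient Theorem of \cite{BAVE2013} to transfer the Bredon cohomology computation to Bredon homology, collapse of the homological Atiyah--Hirzebruch spectral sequence at $E^2$ (with no extension problems since everything is free abelian), and the Higson--Kasparov theorem to identify $K_*^\Gamma(\eub{\Gamma})$ with $K_*(C_r^*(\Gamma))$. If anything, your downward induction through the UCT to rule out torsion in the Bredon homology groups is spelled out more carefully than in the paper, which simply asserts the isomorphism degreewise.
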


\subsection*{Negative  Algebraic K-Theory}\label{subsectionalgebraicktheory}
The success  of  the  Eilenberg-Moore  method  in the  previous  computations  of  Bredon  cohomology  with  respect  to the family  of  finite  subgroups  relies  on  the  structure  lemma  \ref{lemmafamilyfiniteisotropies}. For  the  family  of  virtually  cyclic  subgroups, there  is  no  such  decomposition. The  following  result,  however, identifies  some  restrictions  for  a  subgroup  in $\Gamma$  in order  to  be  virtually  cyclic.

\begin{proposition}\label{pullbackvirtuallycyclic}
Let  $\Gamma$  be a  group  obtained  as  a  pullback  of  the  type
$$
\xymatrix{ \Gamma \ar[r]_-{p_2} \ar[d]^-{p{_1}} & G \ar[d]^{\pi_1} \\  H \ar[r]_{\pi_2} & K},  
$$ 
 where $p_1$ and $p_2$ are  surjective  maps. 
Given a virtually  cyclic  subgroup $V\leq \Gamma$, the  groups $p_1(V)$, $p_2(V)$ are  virtually  cyclic. 
\end{proposition}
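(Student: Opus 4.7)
The statement reduces to a general fact: homomorphic images of virtually cyclic groups are virtually cyclic. My plan is to apply this directly to $p_1$ and $p_2$, without making any real use of the pullback structure beyond the observation that these maps are group homomorphisms.

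First I would recall the definition: a subgroup $V \leq \Gamma$ is virtually cyclic if it contains a cyclic subgroup $C \leq V$ of finite index $[V:C] < \infty$. Then, for each $i \in \{1,2\}$, restrict $p_i$ to $V$ and consider the image $p_i(V) \leq G$ (or $H$). Since $C$ is cyclic, say generated by $c \in V$, the image $p_i(C)$ is generated by $p_i(c)$, hence is cyclic as a subgroup of $p_i(V)$.

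Next I would estimate the index. The restricted map $p_i|_V \colon V \to p_i(V)$ is surjective; composing with the quotient $p_i(V) \to p_i(V)/p_i(C)$ (provided $p_i(C)$ is normal, or else one works with cosets directly) gives a surjection of coset spaces $V/C \twoheadrightarrow p_i(V)/p_i(C)$ sending $vC$ to $p_i(v)p_i(C)$. This is well-defined because $p_i(C)$ contains $p_i(vc v^{-1} \cdot v v^{-1}) = p_i(v c v^{-1})$ whenever $v \in V$, $c \in C$; more cleanly, if $vC = v'C$ then $v^{-1}v' \in C$, so $p_i(v)^{-1}p_i(v') \in p_i(C)$. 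Surjectivity of the coset map gives $[p_i(V) : p_i(C)] \leq [V:C] < \infty$, so $p_i(V)$ contains the cyclic subgroup $p_i(C)$ with finite index, which is precisely the definition of virtually cyclic.

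There is essentially no obstacle here; the hypothesis that $p_1, p_2$ are surjective in the statement is used only to make the phrasing clean, but the argument shows the conclusion holds for any group homomorphism applied to any virtually cyclic subgroup. I would therefore present this as a short direct verification, and note in passing that the analogous property fails in general for the joint map $(p_1,p_2) \colon \Gamma \to G \times H$ in the reverse direction, which is why the family of virtually cyclic subgroups is not well-behaved under the pullback decomposition — this is the point the authors flag as the obstruction to extending the Eilenberg--Moore method of Theorem~\ref{eilenbergmoore} from $\calfin$ to $\mathcal{VC}$.
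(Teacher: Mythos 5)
Your proof is correct: the coset-counting argument showing $[p_i(V):p_i(C)]\leq [V:C]$ is the standard verification that homomorphic images of virtually cyclic groups are virtually cyclic, and your well-definedness check ($vC=v'C$ implies $p_i(v)^{-1}p_i(v')\in p_i(C)$) is exactly what is needed. The paper states this proposition without proof, so there is nothing to compare against; your argument (including the observation that surjectivity of $p_1,p_2$ is not actually used) is the one the authors implicitly rely on.
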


We define the following  family of  subgroups of $\Gamma$  
\begin{multline*}
\mathcal{VC}(G)\times_K \mathcal{VC}(H)=\\ \{V_1\times_{\pi_{1}(V_1)} V_2\mid\,  \text{$V_1\in\mathcal{VC}(G)$ and $V_2\in\mathcal{VC}(H)$}\} 
\end{multline*}

The  family $\mathcal{VC}(G)\times_K \mathcal{VC}(H)$ does  not agree  with the  family  of  virtually cyclic  subgroups  of  $\Gamma$. However,  every virtually cyclic  subgroup  in $\Gamma $ is  contained  in  an element of  the  family.

Thus,  a strategy  to the  classification of  virtually  cyclic  subgroups  of the Group  $\Gamma$  consists  of using  the  iterated  pullback  decomposition  \ref{multiplepullback},  the  several projections  to the  components, as  well as  classification results  for  the  family  of  virtually  cyclic  subgroups  of the  components, take  the pullback  family  and  verify  whether  the  groups appearing there  are virtually  cyclic. 

\begin{proposition}\label{classificationvirtuallycyclic}
The virtually  cyclic  subgroups  of the  Vafa-Witten  Group are, up to isomorphism,  as  follows: 
\begin{itemize}
\item Finite  groups  $0$, $\mathbb{Z}/2\IZ$, $\IZ/4\IZ$,
\item the infinite cyclic group $\IZ$,
\item $\IZ/2\IZ \times \IZ$, $\IZ/4\IZ \times \IZ$, and
\item $\IZ/4\IZ \ast_{\IZ/2\IZ} \IZ/4\IZ$, $D_\infty$, $D_\infty\times \IZ/2\IZ$, $D_\infty\times \IZ/4\IZ$. 
\end{itemize}
\end{proposition}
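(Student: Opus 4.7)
The strategy is dictated by the outline preceding the statement: combine the iterated pullback decomposition \ref{multiplepullback} with Proposition~\ref{pullbackvirtuallycyclic} to reduce the classification to that of the virtually cyclic subgroups of the two building blocks $G=\mathbb{Z}\rtimes_{-1}\mathbb{Z}/4\mathbb{Z}$ and $H=\mathbb{Z}^2\rtimes_i\mathbb{Z}/4\mathbb{Z}$, and then to determine which elements of the resulting pullback family $\mathcal{VC}(G)\times_{\mathbb{Z}/4\mathbb{Z}}\mathcal{VC}(G)\times_{\mathbb{Z}/4\mathbb{Z}}\mathcal{VC}(H)\times_{\mathbb{Z}/4\mathbb{Z}}\mathcal{VC}(H)$ are themselves virtually cyclic subgroups of $\Gamma$ up to isomorphism.

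The first step is to classify virtually cyclic subgroups of $G$ and $H$ by case-splitting on the image $K\leq\mathbb{Z}/4\mathbb{Z}$ and on the kernel $V\cap\mathbb{Z}^n$, using that $V\cap\mathbb{Z}^n$ is a rank-$\leq 1$ sublattice preserved by $K$. For $G$ this produces $\{1\},\mathbb{Z}/2\mathbb{Z},\mathbb{Z}/4\mathbb{Z},\mathbb{Z},\mathbb{Z}\times\mathbb{Z}/2\mathbb{Z}$ and $G$ itself; rewriting $G=\langle t, at\mid t^4=(at)^4=1,\, t^2=(at)^2\rangle$ identifies $G\cong\mathbb{Z}/4\mathbb{Z}\ast_{\mathbb{Z}/2\mathbb{Z}}\mathbb{Z}/4\mathbb{Z}$. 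For $H$, the observation that rotation by $i$ fixes no rank-one sublattice of $\mathbb{Z}^2$ rules out the case $K=\mathbb{Z}/4\mathbb{Z}$ with infinite kernel, and the remaining possibilities are $\{1\},\mathbb{Z}/2\mathbb{Z},\mathbb{Z}/4\mathbb{Z},\mathbb{Z},D_\infty$.

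The second step is a case analysis inside $\Gamma$, stratified by the image $K_0\leq\mathbb{Z}/4\mathbb{Z}$ of $V$. If $K_0=\{1\}$, then $V\leq\mathbb{Z}^6$ gives $\{1\}$ or $\mathbb{Z}$. If $K_0=\mathbb{Z}/2\mathbb{Z}$, the subgroup sits in $\mathbb{Z}^6\rtimes\langle\sigma^2\rangle=\mathbb{Z}^2\times(\mathbb{Z}^4\rtimes_{-1}\mathbb{Z}/2\mathbb{Z})$ because $\sigma^2$ acts trivially on the first two $\mathbb{Z}$-summands and by $-1$ on the remaining $\mathbb{Z}^4$; splitting the rank-$\leq 1$ kernel between these two factors produces $\mathbb{Z}/2\mathbb{Z}$, $\mathbb{Z}\times\mathbb{Z}/2\mathbb{Z}$, $D_\infty$, and $D_\infty\times\mathbb{Z}/2\mathbb{Z}$. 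If $K_0=\mathbb{Z}/4\mathbb{Z}$, the identity $\sum_{j=0}^{3}\sigma^j\equiv 0$ on $\mathbb{Z}^6$ shows that every lift of $\sigma$ has order four, while $\sigma$-invariance forces $V\cap\mathbb{Z}^6$ into $\mathbb{Z}\oplus\mathbb{Z}\oplus 0\oplus 0$; combining these with the possible $\mathbb{Z}$ or $D_\infty$ pieces contributed by one of the $H$-factors yields $\mathbb{Z}/4\mathbb{Z}$, $\mathbb{Z}\times\mathbb{Z}/4\mathbb{Z}$, $\mathbb{Z}/4\mathbb{Z}\ast_{\mathbb{Z}/2\mathbb{Z}}\mathbb{Z}/4\mathbb{Z}$, and $D_\infty\times\mathbb{Z}/4\mathbb{Z}$.

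The main obstacle is the identification step. Once the case analysis produces an extension of $K_0$ by a rank-$\leq 1$ abelian kernel with prescribed action, one must recognise it as one of the listed groups. The cleanest tool is the dichotomy between type-I virtually cyclic groups $F\rtimes\mathbb{Z}$, detected by an infinite cyclic central subgroup of finite index, and type-II groups $F_1\ast_{F_3}F_2$, which have finite centre; tracking the order of a central element coming from $\sigma^2$ then separates $\mathbb{Z}\times\mathbb{Z}/2\mathbb{Z}$ from $\mathbb{Z}\times\mathbb{Z}/4\mathbb{Z}$ and distinguishes the three $D_\infty\times F$ candidates. Ruling out extra pullback combinations from $\mathcal{VC}(G)\times_K\mathcal{VC}(H)$ that are not virtually cyclic amounts to observing that at most one factor can contribute an infinite part, and exhibiting an explicit pullback element of $\Gamma$ realising each listed group closes the argument.
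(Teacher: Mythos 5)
Your overall strategy (stratify a virtually cyclic $V\leq\Gamma$ by its image $K_0\leq\IZ/4\IZ$ and by the rank of $V\cap\IZ^6$, then identify the resulting extension) is sound, and it is more systematic than the paper's own proof, which only exhibits elements of the pullback family $\mathcal{VC}(G)\times_K\mathcal{VC}(H)$, quotes Lemma 3.7 of \cite{luck2005} for the blocks $\IZ^2\rtimes_i\IZ/4\IZ$, and gives no exhaustiveness argument. The problem is in your identification step. Any subgroup $V\leq\IZ^6\rtimes\IZ/4\IZ$ sits in an extension $1\to V\cap\IZ^6\to V\to K_0\to 1$ with $V\cap\IZ^6$ free abelian of rank at most $1$ and $K_0$ cyclic of order $1$, $2$ or $4$; moreover every lift of a generator of $\IZ/4\IZ$ acts on $\IZ^6$ by $(-1,i,i)$, which has no nonzero fixed vector. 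Three of the groups you claim to produce admit no such extension structure: $D_\infty\times\IZ/2\IZ$ has no torsion-free normal subgroup with cyclic quotient (each of its seven index-two subgroups contains $2$-torsion, and its abelianization $(\IZ/2\IZ)^3$ has no quotient of order $4$); $\IZ\times\IZ/4\IZ$ would force an element mapping to a generator of $\IZ/4\IZ$ to centralize a nonzero vector of $\IZ^6$, contradicting fixed-point-freeness; and $D_\infty\times\IZ/4\IZ$ contains both of the previous groups. In the case $K_0=\IZ/2\IZ$ the only extensions of $\IZ/2\IZ$ by $\IZ$ are $\IZ$, $\IZ\times\IZ/2\IZ$ and $D_\infty$, and in the case $K_0=\IZ/4\IZ$ the action on a rank-one invariant sublattice is necessarily by $-1$, with $H^2(\IZ/4\IZ,\IZ_-)=0$, so the only possibilities are $\IZ/4\IZ$ and $\IZ\rtimes_{-1}\IZ/4\IZ\cong\IZ/4\IZ\ast_{\IZ/2\IZ}\IZ/4\IZ$.

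Carried out correctly, your argument therefore yields exactly $1$, $\IZ/2\IZ$, $\IZ/4\IZ$, $\IZ$, $\IZ\times\IZ/2\IZ$, $D_\infty$, $\IZ/4\IZ\ast_{\IZ/2\IZ}\IZ/4\IZ$ and \emph{excludes} the remaining three entries of the statement; as written, your proposal papers over this by asserting that the excluded groups arise from ``combining'' pieces, which is exactly where the logic breaks. For instance, the fibered product over $\IZ/2\IZ$ of $D_\infty$ and $\IZ\times\IZ/2\IZ$ is $\IZ\times D_\infty$, which contains $\IZ^2$ and is not virtually cyclic, not $D_\infty\times\IZ/2\IZ$. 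The paper's own realizability claims suffer from the same difficulty, but that does not rescue your write-up: either derive only the seven groups above and flag the discrepancy with the stated list, or supply explicit embeddings of $\IZ\times\IZ/4\IZ$, $D_\infty\times\IZ/2\IZ$ and $D_\infty\times\IZ/4\IZ$ into $\Gamma$ --- which the extension analysis above shows is impossible.
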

\begin{proof}
The finite  groups are  readily  realizable. 
We obtain the groups  $\IZ/2\IZ \times \IZ$, $\IZ/4\IZ \times \IZ$ inside  the  product $\IZ^4 \rtimes_{-1} \mathbb{Z}/4\IZ$

On the  other  hand, the  infinite virtually  cyclic  subgroups  of the  group $\IZ^2\rtimes_i \IZ/4\IZ$  have  been  classified in  Lemma 3.7,  page 1656 of \cite{luck2005}, which are either  cyclic or $D_\infty$. 

From the  product  family for the  pullback 
$$
\IZ^2\rtimes_i \IZ/4\IZ \times_{\IZ/4\IZ} \IZ^4 \rtimes_{-1} \mathbb{Z}/4\IZ,
$$
 and from  the  group  
 $$
 \IZ \rtimes_{-1} \IZ/4\IZ\cong\IZ/4\IZ \ast_{\IZ/2\IZ} \IZ/4\IZ 
 $$
 we obtain  the  virtually  cyclic  subgroups 
 $$
 D_\infty, \quad D_\infty\times  \IZ/2\IZ,\quad D_\infty\times  \IZ/4\IZ,
 $$ and  the  group $\IZ\rtimes_{-1} \IZ/4\IZ$, which  is  isomorphic  to  the  amalgam 
 $$\IZ/4\IZ \ast_{\IZ/2\IZ} \IZ/4\IZ .
 $$
\end{proof}

The validity of the Farrell-Jones isomorphism for $\Gamma$ is a well established fact, see for example \cite{luck-stamm} this means that 
the \textit{assembly map} in \ref{equationfarrelljones} is an isomorphism. Thus, the algebraic $K$-theory groups of $R[\Gamma]$ are isomorphic to
the equivariant homology groups
$$
\IH^\Gamma_i(E_{\mathcal{VC}}(\Gamma),\IK^{-\infty}(R))\text{ for all } i\in \IZ.
$$
In order to compute these groups there is an Atiyah-Hirzebruch, \cite{luck1998} spectral sequence converging to them  with second page given by
$$
E^2_{p,q}\cong H_p(B_{\mathcal{VC}}; \{\mathcal{K}_q(R[V])\}),
$$
where the above are homology groups with local coefficients in the algebraic $K$-theory groups of $R[V]$ and $V$ in the family of virtually cyclic subgroups of $\Gamma$.
Let us  analyze the coefficients in the above homology groups $ H_p(B_{\mathcal{VC}}; \{\mathcal{K}_q(R[V])\})$ for $p+q\leq -1$. 
First observe that
from the work of Carter \cite[Theorem 1]{carterlower}, we have that the group $K_{-1}(\IZ[G])$ vanishes for the finite groups of our list above and by \cite[Theorem 3]{carterlocalization}, $K_{-i}(\IZ[G])=0$ for all $i>1$ and all   finite groups $G$.  
By the work of T. Farrell and L. Jones \cite[Theorem 2.1 (b)]{farrelvirtuallycyclic} and the generalizations in \cite{Dj}, $K_{-1}(R[V])$ vanishes for $V$ infinite virtually cyclic  subgroup of our list above and by \cite[Theorem 2.1 (a)]{farrelvirtuallycyclic} and the generalizations in \cite{Dj}, $K_{-i}(R[V])$ also vanish for $i\geq 2$ and for all virtually cyclic groups $V$.  Hence the above spectral sequences consists of zero terms in the range $p+q\leq-1$. Hence we have:

\begin{theorem}\label{theoremnegative}
Let  $R$  be a  ring of  algebraic  integers. Let  $i\leq -1$.  Then,  the  algebraic  $K$-Theory  groups  $K_i(R\Gamma)$  vanish. 
\end{theorem}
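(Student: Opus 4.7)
The plan is to use the Farrell-Jones isomorphism together with the Atiyah-Hirzebruch spectral sequence to reduce the question to a vanishing statement about $K_q(RV)$ for $q \le -1$ and $V$ ranging over the virtually cyclic subgroups of $\Gamma$ classified in Proposition \ref{classificationvirtuallycyclic}. First I would invoke the validity of the Farrell-Jones Conjecture for $\Gamma$, which follows from the fact that $\Gamma$ is a crystallographic group (see \cite{luck-stamm} and the references therein, or directly \cite{tsapogas}), to identify $K_i(R\Gamma)$ with the equivariant homology group $\IH^\Gamma_i(\EGF{\Gamma}{\calvcyc}, \IK^{-\infty}(R))$.

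Next I would set up the equivariant Atiyah-Hirzebruch spectral sequence of \cite{luck1998},
\[
E^2_{p,q} \;\cong\; H_p\bigl(B_{\calvcyc}; \{K_q(RV)\}\bigr) \;\Longrightarrow\; \IH^\Gamma_{p+q}(\EGF{\Gamma}{\calvcyc}, \IK^{-\infty}(R)),
\]
whose coefficients at a virtually cyclic subgroup $V \leq \Gamma$ are the $K$-theory groups $K_q(RV)$. To prove the theorem it suffices to show that $E^2_{p,q} = 0$ whenever $p+q \le -1$, and since $p\ge 0$, this amounts to showing $K_q(RV)=0$ for all $q\le -1$ and all $V$ in the list of Proposition \ref{classificationvirtuallycyclic}.

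I would then go through the list case by case. For the finite groups $0$, $\IZ/2\IZ$, $\IZ/4\IZ$, Carter's results \cite[Theorem 1]{carterlower} and \cite[Theorem 3]{carterlocalization} give $K_{-1}(RG)=0$ and $K_{-i}(RG)=0$ for $i\geq 2$ when $R$ is a ring of algebraic integers. For the infinite virtually cyclic subgroups, namely $\IZ$, $\IZ/2\IZ\times\IZ$, $\IZ/4\IZ\times\IZ$, $D_\infty$, $D_\infty\times\IZ/2\IZ$, $D_\infty\times\IZ/4\IZ$ and the amalgam $\IZ/4\IZ\ast_{\IZ/2\IZ}\IZ/4\IZ$, I would combine the Bass--Heller--Swan and Waldhausen type decompositions furnished by \cite[Theorem 2.1]{farrelvirtuallycyclic} together with the generalizations in \cite{Dj} to reduce $K_{-i}(RV)$ for $i\geq 1$ to the corresponding groups of the finite subgroups of $V$, plus Nil/ UNil terms that vanish in negative degrees. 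Combined with the finite group input above, this yields the vanishing of $K_q(RV)$ in all negative degrees.

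The main obstacle is the verification for the two most awkward infinite virtually cyclic groups, namely the amalgam $\IZ/4\IZ\ast_{\IZ/2\IZ}\IZ/4\IZ$ and the products $D_\infty\times \IZ/4\IZ$, where one needs to handle both a nontrivial Waldhausen splitting and the attendant UNil contributions. Once these are controlled via \cite{Dj} and the finite-group computations of Carter, all $E^2_{p,q}$ with $p+q\le -1$ vanish and the theorem follows from the collapse of the spectral sequence in that range.
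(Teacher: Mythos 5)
Your proposal follows essentially the same route as the paper: invoke the Farrell--Jones isomorphism for the crystallographic group $\Gamma$, feed the classification of virtually cyclic subgroups into the Atiyah--Hirzebruch spectral sequence of \cite{luck1998}, and kill all $E^2_{p,q}$ with $p+q\le -1$ using Carter's vanishing results for finite groups together with \cite[Theorem 2.1]{farrelvirtuallycyclic} and \cite{Dj} for the infinite virtually cyclic ones. The only difference is cosmetic --- you unpack the Farrell--Jones input as a Bass--Heller--Swan/Waldhausen reduction to finite subgroups, while the paper cites the vanishing statements directly --- so the argument matches.
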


\bibliographystyle{alpha}
\bibliography{BAJUVE29AGO}
\end{document}